\newtheorem{theorem}{Theorem}[section]
\newtheorem{corollary}[theorem]{Corollary}
\newtheorem{lemma}[theorem]{Lemma}
\newtheorem{prop}[theorem]{Proposition}
\theoremstyle{definition}
\newtheorem{definition}[theorem]{Definition}
\def\N{\mathbb{N}}
\def\R{\mathbb{R}}
\let\e=\varepsilon
\let\r=\rho
\let\t=\tilde
\let\ol=\overline
\let\ul=\underline
\let\mc=\mathcal
\def\1{\mathbbm{1}}
\def\ll{\llbracket}
\def\rr{\rrbracket}
\newenvironment{formula}[1]{\begin{equation}\label{#1}}
                       {\end{equation}\noindent}
\def\Fi#1{\begin{formula}{#1}}
\def\Ff{\end{formula}\noindent}
\title{\bf Propagation properties in a multi-species SIR reaction-diffusion system.}
\author[1]{Romain {\sc Ducasse}}
\author[2]{Samuel {\sc Nordmann}}
\affil[1]{Universit\'e Paris Cit\'e and Sorbonne Universit\'e, CNRS, Laboratoire Jacques-Louis Lions (LJLL), F-75006 Paris, France}
\affil[2]{Department of Applied Mathematics, Tel Aviv University}
\date{}
\begin{document}

\maketitle

\noindent {\textbf{Keywords:} reaction-diffusion systems, SIR models, propagation properties, spreading speed, multi-species models,  epidemiology, threshold phenomenon.} \\

\noindent {\textbf{MSC:} 35B36, 35B40, 35K10, 35K40, 35K57, 92D30.}
\begin{abstract}
    We consider a multi-species reaction-diffusion system that arises in epidemiology to describe the spread of several strains, or variants, of a disease in a population. Our model is a natural spatial, multi-species, extension of the classical SIR model of Kermack and McKendrick. First, we study the long-time behavior of the solutions and show that there is a ``selection via propagation" phenomenon:~starting with $N$ strains, only a subset of them - that we identify - propagates and invades space, with some given speeds that we compute. Then, we obtain some qualitative properties concerning the effects of the competition between the different strains on the outcome of the epidemic. In particular, we prove that the dynamic of the model is not well characterized by the usual notion of {\em basic reproduction number}, which strongly differs from the classical case with one strain.
\end{abstract}

\section{Introduction}

\subsection{Problem under consideration}

In $1927$, Kermack and McKendrick \cite{KmcK1, KmcK2, KmcK3} introduced several deterministic models designed to describe the temporal development of a disease in a population, among which the celebrated \emph{SIR} model, which became a corner stone of mathematical epidemiology. In this model, the population under consideration is divided into three {\em compartments}: the \emph{susceptible} (who are not infected), the {\em infectious} (who have the disease and can transmit it to the susceptibles) and the {\em recovered} (who had the disease and are now immune, or dead). The original SIR model is the following system of ODEs:
\begin{equation}\label{SIR ODE}
\left\{
    \begin{array}{rlll}
    \dot S(t) &= - \alpha SI,& \quad    &t>0,  \\
   \dot I(t) &= \alpha SI - \mu I,& \quad    &t>0,\\
   \dot R(t) &= \mu I,& \quad    &t>0.
   \end{array}
\right.
\end{equation}
The functions $S(t),I(t),R(t)$ represent the number of susceptible, infectious or recovered individuals respectively, at time $t>0$. The susceptible are contaminated by the infectious, according to a law of mass-action, with a rate $\alpha  I$, where $\alpha>0$ is a constant parameter that accounts for the transmission efficiency of the disease. The infected individuals cease to be infectious with some recovery rate $\mu$ (then, $\frac{1}{\mu}$ is the mean duration of the infection). They then become recovered. The recovered individual do not become susceptible again and therefore do not play any role in the dynamic of the system.\\

We review the main results on \eqref{SIR ODE} in Section \ref{sec rap}. For now, let us mention that a serious drawback of this model is that it is purely temporal: spatial effects (migrations and localisation of individuals for instance) are not taken into account. Spatial effects are now recognised as being of first importance in the description of epidemics. Many models were introduced to bridge this gap. A very classical way to build a spatial model is to add diffusion terms on the equations. The simplest diffusive SIR model is the following (originally introduced by K\"all\'en \cite{Kallen}):
\begin{equation}\label{sir spa}
\left\{
    \begin{array}{rll}
    \partial_t S(t,x) &= - \alpha SI,\quad    &t>0, \ x\in \R, \\
   \partial_t I(t,x) &= d \Delta I + \alpha SI - \mu I, \quad &t>0, \ x\in \R, \\
   \partial_t R(t,x) &= \mu I, \quad    &t>0, \ x\in \R.
   \end{array}
\right.
\end{equation}
Now, $S,I,R$ do not only depend on the time $t$ but also on a space variable $x\in \R$. The presence of a diffusion operator on the infected individuals account for the fact that they diffuse, they move randomly, and therefore can bring the disease into places were it was not present before, potentially leading to a spatial spread of the disease. Many other spatial models were introduced, see Section \ref{sec rap} for a brief review. One may find natural to add diffusion on the $S$ and $R$ individuals also. It turns out that doing so in the equation for $R$ does not change much, but doing so in the equation for $S$ makes the analysis much more intricate, we give more details about that in Section \ref{sec rap}.\\

The goal of this paper is to answer the following natural question:
\begin{center}
\textbf{What happens when there are several strains?}
\end{center}

To answer this, we consider the following spatial and multi-species SIR model:
\begin{equation}\label{syst}
    \left\{
    \begin{array}{lll}
         \partial_t S &= -S  \left(\sum_{k=1}^N\alpha_k I_k\right), \quad &t>0, \ x\in \R,  \\
         \partial_t I_k &= d_k \Delta I_k + (\alpha_k S -\mu_k) I_k , \quad &t>0, \ x\in \R,\ k\in \llbracket 1, N \rrbracket, \\
         \partial_t R_k &= \mu_k I_k , \quad &t>0, \ x\in \R,\ k\in \llbracket 1, N \rrbracket.
    \end{array}
    \right.
\end{equation}
Here $N\in \N$ is a positive integer, and the system is therefore made of $2N+1$ equations. The parameters $\alpha_1,\ldots,\alpha_N,\mu_1,\ldots,\mu_N, d_1,\ldots,d_N$ are strictly positive constants. The unknowns are the $2N+1$ functions $(S(t,x), I_1(t,x),\ldots,I_N(t,x),R_1(t,x),\ldots,R_N(t,x))$. Being an evolution problem, \eqref{syst} has to be completed with $2N+1$ initial data.

This paper is concerned with the long-time behavior of solutions to \eqref{syst} - our main interests are the convergence to equilibrium and the speed of propagation of solutions. \\

System \eqref{syst} describes the spread of $N$ traits, or strains, or variants of a disease in a population. In the following, we will always use the generic term \emph{trait}. For $k\in\llbracket 1,N\rrbracket$, $I_k$ represents the number of individuals infected by the trait $k$. They can transmit this trait to the susceptible individuals with rate $\alpha_k$, they recover with rate $\mu_k$, and when recovering they turn into individuals of type $R_k$. The individuals $I_k$ diffuse with rate $d_k$.\\

The model \eqref{syst} is probably the simplest and most natural spatial multi-species SIR model one could come up with: the only phenomena taken into account are contamination and remission. There is no waning of immunity, there are no vital dynamics (arrival of new individuals, birth and death of individuals), and there can not be multiple infections: individuals who were infected by the trait $k$ will not be infected by any other trait. Yet, as we shall prove, this model exhibits some interesting behaviors.\\

The main result of this paper, presented in Section \ref{sec res} as Theorem \ref{main th}, is that there is a \textbf{selection via propagation} phenomenon that appears: only a given subset of all the traits (possibly empty) propagates, all the other traits vanish. Such a phenomenon can be seen as an emergent property of the model: there is no {\em a priori} selection mechanism in the equations.

In addition to finding the traits that survive, and to computing their limits, we also identify how these traits spread across space. As we will see, the system develops spatial traveling patterns such as {\em pulses} and {\em terraces}.\\

This result at hand, we use it to obtain some qualitative properties on the model. More precisely, we study how the parameters of the model (the contamination, remission, and diffusion rates, and the initial densities), influence the outcome of the epidemic. In the present paper, we consider only three properties, that illustrate how this model with several strains differs from the case with one strain.

First, we show that the so-called {\em basic reproduction number} (whose definition is recalled after) is not a good predictor of the dynamic of the model, in the sense that some strains that have large basic reproduction number may not propagate while some with low basic reproduction number would. This is somewhat counter-intuitive, as this basic reproduction number is usually considered as the criterion to compute the evolution of the disease.

The second property we investigate concerns the contamination rates of each trait that propagates. More precisely, we show that each trait that spreads is less contagious and has a lower recovery rate that all the traits that have propagated before.

Third, we study how the number of surviving individuals depends on the initial number of susceptible individuals. When there is only one strain that propagates, it is known that the number of surviving individuals is a decreasing function of the number of initially susceptible individuals. When there are several strains, we will see that this may be false.\\

Let us mention that some models with several strains were previously introduced in different contexts, sometimes in connection with evolutionary issues, we refer for instance to \cite{epid2, epid1} and the references therein. However, these models do not exhibit the same behaviors as the one we consider here: either only one of the trait propagates, or all the traits propagate - there is either no selection, or a very strong one. This is because the models usually considered in the literature take into account other effects (as internal dynamics, such as reproduction, for instance), which somehow impose a given dynamic to the system and strongly change its outcome, and also may simplify the analysis.\\

To conclude this introduction, let us mention that the system we consider, \eqref{syst}, is a reaction-diffusion system. Although there is a wide literature on such topics, there are very few results concerning general systems with more than $2$ components. A key difficulty is the lack of comparison principles for such systems. Also, the convergence to equilibrium for reaction-diffusion systems is in general a very intricate question.

Before presenting our results, we first give a quick review on the one-species SIR models \eqref{SIR ODE} and \eqref{sir spa}. This will prove useful in order to develop the intuition to understand the dynamic of \eqref{syst}.

\subsection{SIR models: review and basic results}\label{sec rap}

As we mentioned already, the original SIR model is the ODE system \eqref{SIR ODE}. The main result on this system, proved by Kermack and McKendrick in \cite{KmcK1}, is that there is a {\em threshold phenomenon}: 
\begin{itemize}
    \item If $\frac{\alpha S_0}{\mu}>1$, the disease {\em propagates}, in the sense that there is $\eta>0$ such that, no matter how small $I_0>0$ is, $\lim_{t\to+\infty}S(t) < S_0 - \eta$. 
    
    In other words, even the presence of a very small amount of infectious individuals leads to a strictly positive number of infections.

    \item If $\frac{\alpha S_0}{\mu}\leq 1$, the disease {\em fades off}, i.e., $\lim_{t\to+\infty}S(t)$ can be made as close as we want to $S_0$ by decreasing $I_0>0$.
\end{itemize}
This celebrated result led to introduce the so-called {\em basic reproduction number}
$$
\mc R_0 := \frac{\alpha S_0}{\mu},
$$
whose value dictates the behavior of the disease. This notion is now a cornerstone of mathematical epidemiology, and was extended to several other contexts.\\

As we mentioned, a natural way to turn the ODE system into a spatial system is to add diffusion terms in it. This leads to the system
\begin{equation}\label{sir diff}
\left\{
    \begin{array}{rll}
    \partial_t S(t,x) &= d_S \Delta S - \alpha SI,\quad    &t>0, \ x\in \R, \\
   \partial_t I(t,x) &= d_I \Delta I + \alpha SI - \mu I, \quad &t>0, \ x\in \R, \\
   \partial_t R(t,x) &= \mu I, \quad    &t>0, \ x\in \R.
   \end{array}
\right.
\end{equation}
We refer to \cite{Murray2} for more details on this model. Observe that we do not write diffusion on the $R$ individuals, this is because they do not really play any role in the dynamic. Adding diffusion on them would not change much the analysis and the results. The diffusion on the $S$ individuals, on the other hand, rises many complications. For instance, the boundedness of solutions is not clear at all (when $d_S = d_I$, one could sum the equations for $S$ and $I$ and apply the parabolic comparison principle, but in general, the question is rather intricate, see \cite{DucrotHeat} for instance). \\

For this reason, we consider here only the case where $d_S = 0$, that is, the susceptible do not move, and then \eqref{sir diff} boils down to \eqref{sir spa}. In this case, the threshold phenomenon of Kermack and McKendrick can be extended in the following form: consider \eqref{sir spa} and take for initial data $S(0,x) = S_0 \in \R_ +$ (i.e., the susceptible population is initially uniformly distributed), $I(0,x) = I_0(x)$ where $I_0$ is non-negative, non-zero, and compactly supported (the infectious individuals and initially localized), and take $R(0,x) = 0$. 

First, if $\frac{\alpha S_0}{\mu}\leq 1$, then the epidemic fades off in the sense that, for every $I_0$ non-negative and compactly supported, one has
$$
\lim_{\vert x \vert \to +\infty}\lim_{t\to+\infty}R(t,x) = 0\quad \text{and}\quad \lim_{\vert x \vert \to +\infty}\lim_{t\to+\infty}S(t,x) = S_0.
$$
This means that the number of recovered and susceptible individuals are unchanged after the contamination (in the limit $t\to+\infty)$, at least far away in space (in the limit $\vert x\vert \to +\infty)$: far away from the initial focus of infection (the support of $I_0$ say), no contamination occurred. Of course, in the vicinity of the initial focus of infection, even when the disease does not propagate, $\lim_{t\to+\infty}S$ and $\lim_{t\to+\infty}R$ can be arbitrarily small (resp. large), depending on the initial datum $I_0$. This is natural: even if a disease does not propagate, if half of the population of a city is initially contaminated, then at least half of the population will become recovered.

Now, if $\frac{\alpha S_0}{\mu}>1$, the disease propagates with speed $c^\star := 2\sqrt{d(\alpha S_0 - \mu)}$ and has {\em asymptotic value} $\rho>0$, in the sense that
$$
\sup_{\vert x \vert > c t} R(t,x)\underset{t \to +\infty}{\longrightarrow} 0,\quad \forall c > c^\star\quad \text{and}\quad \sup_{\delta<\vert x \vert < c t} \vert R(t,x) - \r\vert  \underset{\substack{t \to +\infty \\ \delta \to +\infty}}{\longrightarrow} 0,\quad \forall c < c^\star.
$$
This means that infections occur everywhere in space asymptotically in time, even far away from the support of $I_0$. The number $\rho$ represents the number of casualties, and one can show that $\rho$ is the unique positive solution of the transcendent equation $S_0(1-e^{-\frac{\alpha}{\mu}\rho})=\rho$. Observe that it is independent of $I_0$.

The propagation can also be seen on the evolution of the number of susceptible $S(t,x)$:
$$
\sup_{\vert x \vert > c t} \vert S(t,x) -S_0\vert\underset{t \to +\infty}{\longrightarrow} 0,\quad \forall c > c^\star\quad \text{and}\quad \sup_{\delta<\vert x \vert < c t} \vert S(t,x) - S_1\vert  \underset{\substack{t \to +\infty \\ \delta \to +\infty}}{\longrightarrow} 0,\quad \forall c < c^\star,
$$
where $S_1 := S_0 e^{-\frac{\alpha_1}{\mu_1}\rho}<S_0$.

Again, saying that the disease propagates is relevant only for large values of $\vert x \vert$, hence the $\delta$ in the above suprema. 

A crucial observation is that the dynamic far away from the support of $I_0$ will be independent of $I_0$ itself.

The above result means that, when $\frac{\alpha S_0}{\mu}>1$, then $R$ develops into two traveling waves that connect $0$ to $\rho$, one traveling to the right and one to the left, and $S$ develops into two waves that connect $S_0$ to $S_1$. The $I$ density develops into two traveling pulses (one going to the right, one going to the left). The dynamic at a given time is represented in the following drawing, where $I$ is represented with dashed line, and $S,R$ with solid lines.
\begin{center}
    \includegraphics[scale=0.7]{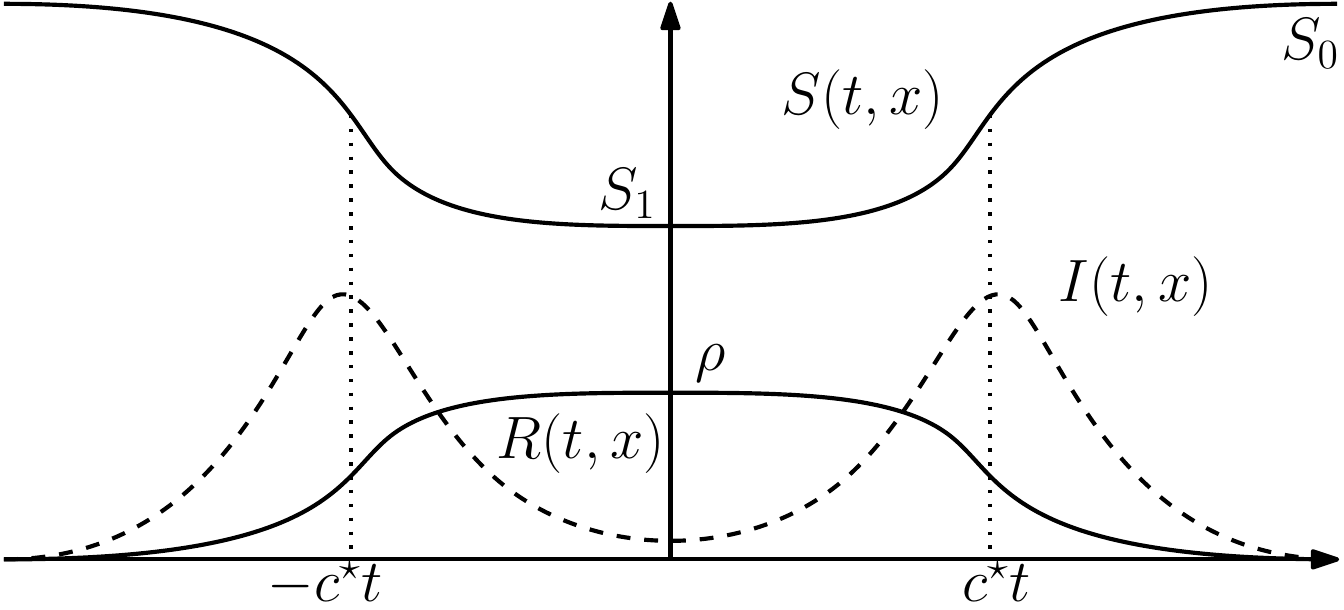}
\end{center}
The proof of this result relies on the fact that $R(t,x)$ solves
\begin{equation}\label{rd 1}
\partial_t R(t,x) = d_I \Delta R(t,x) + f(R(t,x)) +\mu I_0(x),\quad t>0,\ x\in \R,
\end{equation}
where $f(z) = \mu S_0 (1 - e^{-\frac{\alpha}{\mu}z}) - \mu z$.

This can be obtained by doing some easy algebraic manipulations on the equations (we will present these in the sequel). The resulting equation \eqref{rd 1} is a {\em KPP reaction-diffusion equation}, whose name refers to Kolmogorov, Petrovski and Piskunov, who proved some crucial results on such equations, see \cite{KPP}. The threshold phenomenon and the speed of propagation we mentioned before are then direct consequences of some now basic results from reaction-diffusion equation theory (that we will present below in the course of our proof). In particular, $R$ converges to a stationary solution of \eqref{rd 1}, that is, $R(t,x)\underset{t\to+\infty}{\longrightarrow} R_\infty(x)$, where $R_\infty$ solves
$$
d_I \Delta R_\infty(x) + f(R_\infty(x)) +\mu I_0(x) = 0,\quad  x\in \R.
$$
Moreover, one can show that $\lim_{\vert x \vert \to +\infty}R_\infty(x) = \rho$, where $\rho \in \R$ satisfies the stationary equation ``far away" in space, i.e., where $I_0$, which is compactly supported, has no influence. More precisely, $\rho$ is solution of 
$$
f(\rho)=0.
$$
It is easy to see that there is a unique positive $\rho$ solution of this equation if and only if
$f^\prime(0)=\alpha S_0 - \mu>0$.\\

For the reader interested in the case where $d_S >0$, we refer to the work of Hosono and Ilyas \cite{HI}, who proved the existence of traveling fronts for \eqref{sir diff}, and also \cite{DucrotHeat}, where the Cauchy problem is considered (under a more general form).\\

Let us briefly mention some related works. Here, we build spatial models from the ODE model by adding diffusion terms. One could also consider models where contaminations occur at distance. This was first suggested by Kendall \cite{K1, K2}, and such models were studied in the homogeneous case by Diekmann and Thieme, see \cite{Di1, Di2, T1, T2, T3, TZ}, and in the heterogeneous case by the first author, see \cite{Duc1}. Heterogeneous diffusive SIR models were also considered, see \cite{Duc2, DG}. Let us also mention the work \cite{BRRSIR} where the authors consider the influence of networks on similar models.

\subsection{Results of the paper}\label{sec res}

We now present here our main results. Before doing so, let us mention that the existence and uniqueness of classical solutions for \eqref{syst} (that is, solutions that are $C^1_tC^2_x$ for $I,R$ and $C^1_tC^0_x$ for $S$) is standard, see \cite{Henry} for instance.

We start with defining what it means for a trait to propagate, and what is the speed of propagation.

\begin{definition}[Propagation]\label{def}
Let $(S,I_1,\ldots,I_N,R_1,\ldots,R_N)$ be the solution of \eqref{syst} arising from the initial datum $(S_0,I_{1}^0,\ldots,I_N^0,0,\ldots,0)$, where $S_0$ is a positive constant and $I_{1}^0,\ldots, I_{N}^0$ are continuous, non-zero, non-negative and compactly supported.

\begin{itemize}

\item We define the {\bf asymptotic value} of the $k$-th trait, $k\in \llbracket 1, N \rrbracket$, by
$$
\r_k := \liminf_{\vert x \vert \to +\infty} \left(\lim_{t\to+\infty}R_k(t,x)\right).
$$

\item We say that the \textbf{$k$-th trait propagates} if and only if
$$
\r_k >0.
$$
If $\rho_k = 0$, we say that \textbf{the trait vanishes or goes extinct}.

\item When the $k$-th trait, $k\in \llbracket 1, N \rrbracket$,  propagates, we say that it {\bf propagates with speed} $c^\star>0$ if
$$
\left( \sup_{\vert x \vert > c t} R_k(t,x)\right) \underset{t \to +\infty}{\longrightarrow} 0,\quad \text{ for every }\ c > c^\star,
$$
and
$$
\left( \sup_{\delta<\vert x \vert < c t} \vert R_k(t,x) - \r_k\vert \right) \underset{\substack{t \to +\infty \\ \delta \to +\infty}}{\longrightarrow} 0,\quad \text{ for every }\ c < c^\star.
$$
\end{itemize}

\end{definition}
Observe that, in the system \eqref{syst}, we have that $\partial_t R_k(t,x)\geq 0$, hence, $\lim_{t\to +\infty}R_k(t,x)$ exists. Then $\rho_k$ is always well defined. 

We mention that $\lim_{t\to+\infty}R_k(t,x)$ depends on the values of the initial data $I_1^0,\ldots,I_N^0$, but $\rho_k$ does not, as we shall see. Whether a trait propagates or not will not depend on the initial data, provided they all satisfy the hypotheses of the theorem. In particular, if some initial datum is not compactly supported, the result is false. Also, if some $I_k^0$ is everywhere equal to $0$, one has to remove it and consider the problem with $N-1$ traits.\\

Let us now introduce some notations. For $N$ fixed (the number of traits that exist at initial time), for $(\alpha_k)_{\llbracket 1,N\rrbracket}, (\mu_k)_{\llbracket 1,N\rrbracket}, (d_k)_{\llbracket 1,N\rrbracket}$ given, we define some functions $c_k(\sigma), f_k(\rho,\sigma)$ and $\rho_k(\sigma)$, of variables $\rho \in \R$ and $\sigma \in \R^+$.

\begin{itemize}

\item The \emph{speed function} $c_k(\sigma)$ is defined for $\sigma \geq 0$ by

$$
c_k(\sigma) := \begin{cases}
2\sqrt{d_k(\alpha_k \sigma - \mu_k)}, \quad &\text{ when }\ \alpha_k \sigma - \mu_k\geq 0,\\
0,\quad &\text{otherwise}.
\end{cases}
$$

\item The \emph{reaction} function $f_k(\rho,\mu)$, defined for $\rho \in \R, \sigma \geq 0$, is given by
$$ 
f_k(\rho,\sigma) :=  \sigma \mu_k(1-e^{-\frac{\alpha_k}{\mu_k}\rho}) - \mu_k \rho.
$$

\item For $\sigma >0$ given, the equation $f_k(\cdot,\sigma)=0$ has a unique positive solution if and only if $\frac{\alpha_k \sigma}{\mu_k}>1$. When it exists, we denote this positive solution
$$
\rho_k(\sigma).
$$

\end{itemize}

According to the reminders we made in the previous section concerning the model~\eqref{sir spa} with one strain, we see that $c_k(\sigma), f_k(\rho,\sigma)$ and $\rho_k(\sigma)$ would be respectively the speed of propagation, the nonlinear reaction term and the asymptotic value for the $k$-th trait, provided it were the only trait present in the model and if the initial number of susceptible were $\sigma$.\\

We can now explain the heuristic underlying the results we are about to present. When there is only one trait, the result we mention in the previous section says that either it fades off, or it propagates with some speed. It it propagates, the trait consumes a certain proportion of the susceptible individuals. If we have several traits, then we can expect the competition between the traits to be negligible at first, and then each trait would start to propagate with the speed it would have provided it were alone: the trait $k$ would propagate with speed $c_k(S_0)$. The idea is that the trait that would propagate faster will indeed do so. But, doing so, it will consume a proportion of the susceptible, and therefore all the other traits will have to propagate in an environment where the number of susceptible is reduced - and the speed of propagation of each trait depends on the number of susceptible individuals.

For instance, assume that $c_1(S_0) > c_k(S_0)$ for all $k\neq 1$. Then, the trait $1$ would start to propagate, and it will leave behind a number of susceptible equal to $S_1 := S_0 e^{-\frac{\alpha_1}{\mu_1}\rho_1}$ (this is at least what happens if it were alone, as explained in the previous section). Then all the other traits will be left in an environment where the number of susceptible is equal to $S_1$, not $S_0$. Then, one can expect the above dynamic to continue this way: the trait $j\neq 1$ that maximizes $c_j(S_1)$ will propagate, and consume a proportion of the susceptible, and the dynamic should go on until either there are not enough susceptible left for the remaining traits to propagate, or all the traits have propagated.

At a given time, the situation should look like this, when $2$ traits propagate (we only represent the right hand side of space, the dynamic is similar on the left hand side):

\begin{center}
    \includegraphics[scale=0.5]{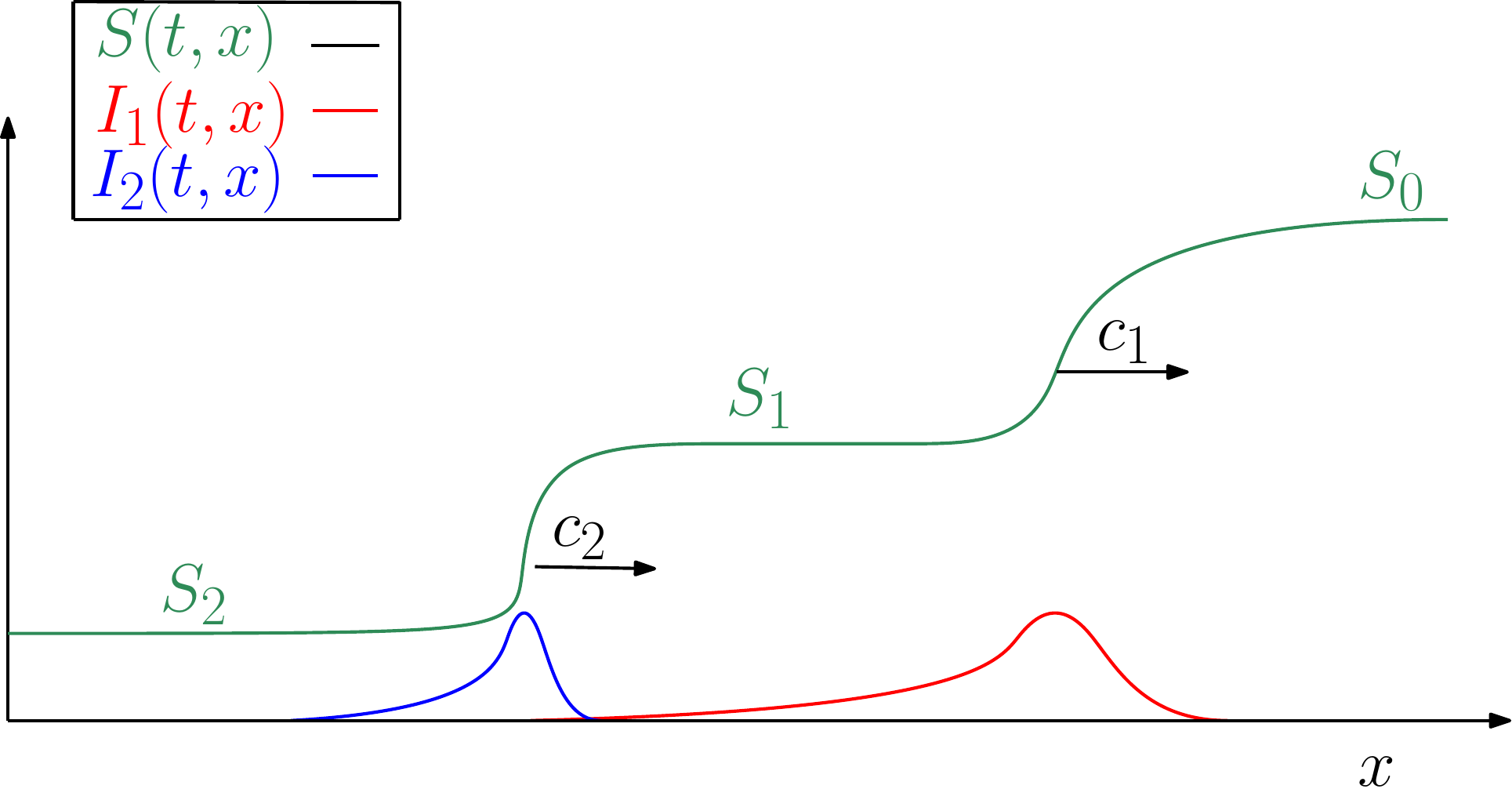}
\end{center}

We will prove that the above heuristic is indeed true, but only under some extra hypotheses. Basically, we will have to require the speeds of each trait that propagate are not ``too close". This will ensure that we control the interactions between the traits. Crucially, if this is not true, then the heuristic and the result may be false, see the remark after the statement of Theorem \ref{main th}.

To formalise the heuristic, we introduce the following:

\begin{definition}\label{def prop seq}

Let $N \in \N^\star$ and $S_0>0$ be fixed, and let $(d_k,\alpha_k,\mu_k)_{k\in\llbracket 1, N\rrbracket}$ be strictly positive real numbers. We define the \emph{propagation sequences} as four (finite) sequences $(k_i)_{i\in \llbracket 1, p\rrbracket}$, $(c_i)_{i\in \llbracket 1, p\rrbracket}$, $(\rho_i)_{i\in \llbracket 1, p\rrbracket}$ and $(S_i)_{i\in \llbracket 0, p\rrbracket}$ as follows: first, we take $S_{i=0} = S_0$, and then we define recursively:
\begin{itemize}
    \item $k_i := \text{argmax}_{ z \in \llbracket1,N\rrbracket\backslash \{k_1,\ldots,k_{i-1}\}} \left\{c_z(S_{i-1}) \ \text{such that}\ c_z(S_{i-1})>0\right\}$. For now, we assume that the argmax is well defined (that is, there is a single $z$ that maximizes $c_z(S_{i-1})$). When stating our results, we will actually need a stronger hypothesis.
    
    If the above set is empty, then $k_i$ is not defined and the sequence stops here.
    
    \item $c_i := c_{k_i}(S_{i-1})$.
    
    \item $\rho_i := \rho_{k_i}(S_{i-1})$.
    
    \item $S_i := S_{i-1}e^{-\frac{\alpha_{k_i}}{\mu_{k_i}}\rho_i}$.
\end{itemize}

\end{definition}
The heuristic we explained before then says that the traits $k_1,\dots,k_p$ are the one that should propagate, with speed $c_1,\ldots,c_p$ and each one will have asymptotic value $\rho_1,\ldots,\rho_p$ respectively.

For any trait $k_i$ that propagates, with $i = 1,\ldots,p$, then there will be $S_{i-1}$ susceptible just before this trait propagates, and $S_{i}$ susceptible after.\\

One could define the {\em final number of susceptible individuals}
$$
S_\infty := \lim_{\vert x \vert \to +\infty}\left(\lim_{t\to+\infty} S(t,x)\right).
$$
It represents the number of individuals that have survived to all the traits. When our heuristic holds true, we will have
$$
S_\infty = S_p,
$$
where $p$ is given by the propagation sequences.\\

Our main result is the following :

\begin{theorem}\label{main th}
Let $(S,I_1,\ldots, I_N,R_1,\ldots,R_N)$ be the solution of \eqref{syst} arising from the initial datum $(S_0,I_1^0,\ldots, I_N^0,0,\ldots,0)$, where $S_0\in \R^+$ and the $I_i^0$ are continuous non-negative, non-zero, compactly supported initial data, for all $i\in\llbracket 1, N \rrbracket$.

 Let $(k_i)_{i\in\llbracket 1,p\rrbracket},(c_i)_{i\in\llbracket 1,p\rrbracket},(\rho_i)_{i\in\llbracket 1,p\rrbracket},(S_i)_{i\in\llbracket 0,p\rrbracket}$ be the propagation sequences given in Definition \ref{def prop seq}.

Assume that, for all $i\in \llbracket 1,p-1\rrbracket$,
\begin{equation}\label{hyp}
c_{k}(S_{i-1}) + c_k(S_i) < c_{i},\quad \text{for all} \ k\neq k_1,\ldots,k_i, 
\end{equation}
and that
\begin{equation}\label{hyp 2}
\alpha_k S_p - \mu_k<0,\quad \text{for all} \ k\neq k_1,\ldots,k_p.  
\end{equation}
Then, the traits $k_1,\ldots,k_p$ propagate in the sense of Definition \ref{def}, with speeds $c_1,\ldots,c_p$, and have asymptotic values $\mc \rho_1,\ldots,\rho_p$. All the other traits do not propagate and fade off.
\end{theorem}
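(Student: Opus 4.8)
The plan is to exploit exact algebraic identities that reduce the coupling of \eqref{syst} to a single scalar quantity, combine them with linear comparison to bound the spreading speeds from above, and then induct on the index $i\in\llbracket1,p\rrbracket$ of the propagation sequences, each step reducing --- modulo errors that vanish in the appropriate moving window --- to the scalar KPP analysis recalled in Section~\ref{sec rap}.

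First I would record the standard facts: the solution is global, nonnegative and bounded ($0<S\le S_0$); $t\mapsto R_k(t,x)$ is nondecreasing and $t\mapsto S(t,x)$ nonincreasing, so the pointwise limits $R_k^\infty$ and $S^\infty$ exist, are attained in $C^2_{loc}$ by parabolic regularity, and $I_k(t,\cdot)\to0$ in $C_{loc}$. The two identities on which everything rests are: integrating $\partial_t\log S=-\sum_j\frac{\alpha_j}{\mu_j}\partial_tR_j$ (with $R_j(0,\cdot)=0$) gives $S(t,x)=S_0\exp(-\sum_j\frac{\alpha_j}{\mu_j}R_j(t,x))$, so the $I_k$-equations are coupled only through the scalar $S$; and, since $\rho_i=\rho_{k_i}(S_{i-1})$ is the positive zero of the strictly concave function $f_{k_i}(\cdot,S_{i-1})$, one gets $S_{i-1}-S_i=\rho_i$ and $\partial_\rho f_{k_i}(\rho_i,S_{i-1})=\alpha_{k_i}S_i-\mu_{k_i}<0$, i.e.\ $c_{k_i}(S_i)=0$ --- each selected trait makes the environment subcritical for itself, which freezes the terrace behind its front. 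Next, since $S\le S_0$ each $I_k$ is a subsolution of the linear KPP equation with coefficient $\alpha_kS_0-\mu_k$; comparison with exponential supersolutions, then integration in time, gives $\sup_{|x|>ct}R_k(t,x)\to0$ for every $c>c_k(S_0)$, so no trait outruns $c_1$; this outer bound will be sharpened to $c>c_i$ for trait $k_i$ once the induction shows $S\to S_{i-1}$ just ahead of the $k_i$-front.

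The core is the induction on $i$: I would prove that $k_1,\dots,k_i$ propagate with speeds $c_1>\dots>c_i$ and asymptotic values $\rho_1,\dots,\rho_i$, and that for every $\varepsilon>0$, for $t$ large, $S(t,x)$ equals $S_{i-1}+o(1)$ on the window $(c_i+\varepsilon)t<|x|<(c_{i-1}-\varepsilon)t$ and $S_i+o(1)$ on $(c_{i+1}+\varepsilon)t<|x|<(c_i-\varepsilon)t$ (with $c_0:=+\infty$, $c_{p+1}:=0$). Granting the inductive hypothesis, in the window $|x|\sim c_it$ the earlier traits have already saturated to $\rho_1,\dots,\rho_{i-1}$ (being strictly faster), while \eqref{hyp} guarantees that every remaining trait $k\notin\{k_1,\dots,k_i\}$ --- whose speeds in the backgrounds $S_{i-1}$ and $S_i$ are each strictly below $c_i$ and in fact sum to less than $c_i$ --- cannot enter this window; hence $\sum_{\ell<i}\frac{\alpha_{k_\ell}}{\mu_{k_\ell}}R_{k_\ell}=\log(S_0/S_{i-1})+o(1)$ and $\sum_{k\neq k_1,\dots,k_i}R_k=o(1)$ there, so the relation for $S$ gives $S=S_{i-1}e^{-\frac{\alpha_{k_i}}{\mu_{k_i}}R_{k_i}}+o(1)$. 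Inserting this into the $I_{k_i}$-equation and integrating in time, as in the scalar derivation of \eqref{rd 1}, yields $\partial_tR_{k_i}=d_{k_i}\Delta R_{k_i}+\mu_{k_i}I_{k_i}^0+f_{k_i}(R_{k_i},S_{i-1})+o(1)$ in that window; squeezing $R_{k_i}$ between sub- and supersolutions built from translates of the KPP travelling front associated with $f_{k_i}(\cdot,S_{i-1})$ --- exactly as for the scalar model \eqref{sir spa} --- gives propagation of $k_i$ at speed $c_i=c_{k_i}(S_{i-1})$ with asymptotic value $\rho_i=\rho_{k_i}(S_{i-1})$, and, using $S_{i-1}-S_i=\rho_i$, the a priori estimate that $S\to S_i$ just behind the $k_i$-front. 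Since $c_{k_i}(S_i)=0$, trait $k_i$ cannot reignite once the environment drops further, so $R_{k_i}^\infty\equiv\rho_i$ for $|x|$ large, pinning the asymptotic value exactly; the base case $i=1$ is the same with $S_0$ replacing $S_{i-1}$. After the $p$-th step, $S\to S_p$ on $\delta_0<|x|<(c_p-\varepsilon)t$; by \eqref{hyp 2}, $\alpha_kS_p-\mu_k<0$ for every remaining trait $k$, so there $I_k$ is eventually a subsolution of a linear equation with strictly negative zeroth-order term, and, combined with the confinement supplied by \eqref{hyp} (which pushes each such trait behind all the fronts, hence into the subcritical region), this forces $R_k^\infty(x)\to0$ as $|x|\to+\infty$, i.e.\ $\rho_k=0$: the remaining traits fade off.

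The main obstacle is precisely this inductive step and the confinement claims it uses. The difficulty is that the backgrounds $S_0,\dots,S_p$ do not occupy fixed subsets of space but moving windows whose widths grow linearly in $t$, so scalar KPP theory cannot be applied verbatim: one must construct sub- and supersolutions living in these windows and verify the parabolic inequalities with the coupling errors genuinely absorbed, and controlling $\sum_{j\neq k_i}\alpha_jI_j$ uniformly in the window of $k_i$ is the concrete shape this takes. The precise role of hypothesis \eqref{hyp} --- requiring $c_k(S_{i-1})+c_k(S_i)<c_i$ rather than only $c_k(S_{i-1})<c_i$ --- is to ensure that a trait leaking across the boundary between two consecutive windows still cannot keep pace with the front of $k_i$, so that the bootstrap ``earlier traits saturated, later and leftover traits absent in the window of $k_i$'' closes; as the remark following Theorem~\ref{main th} shows, without such a separation the statement can genuinely fail.
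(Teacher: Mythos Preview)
Your plan matches the paper's approach almost step for step: the identity $S=S_0\exp\bigl(-\sum_j\frac{\alpha_j}{\mu_j}R_j\bigr)$, the induction on the propagation index (the paper's hypothesis $(H_j)$), the reduction of each inductive step to a perturbed scalar KPP problem for $R_{k_{j+1}}$, and the use of \eqref{hyp 2} for the extinction of the leftover traits. The ``main obstacle'' you honestly flag --- making the confinement claims quantitative --- is resolved in the paper by a second, nested induction (Proposition~\ref{prop exp}) that upgrades the exponential bound on each $I_k,R_k$ from speed $c_k(S_j)+\e$ to $c_k(S_{j+1})+\e$ once $(H_{j+1})$ holds; hypothesis \eqref{hyp} enters exactly there (Lemma~\ref{lem tech}), as the inequality $c_k(S_j)+c_k(S_{j+1})<c_{j+1}$ needed to compare two moving exponentials across the boundary $|x|=(c_{j+1}-\eta)t$ of the cone on which the sharpened linear inequality $\partial_tI_k\le d_k\Delta I_k+(\alpha_k\theta S_{j+1}-\mu_k)I_k$ is available --- so your heuristic reading of \eqref{hyp} as controlling ``leakage across the window boundary'' is correct, and this is its precise analytic incarnation.
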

Let us give some remarks on this theorem. First, if, when computing the propagation sequences, one finds that $p=0$ or $p=1$, then the hypothesis \eqref{hyp} is void, and the theorem only requires that \eqref{hyp 2} to hold true. Similarly, if one finds that $p=N$, then the hypothesis \eqref{hyp 2} is void and only \eqref{hyp} is required to apply the theorem.

When $N=1$ and $p=0$, observe that our theorem does not completely boils down to the result on the model with a single trait recalled in Section \ref{sec rap}. Indeed, in this case, we need \eqref{hyp 2} to be able to say that the (single) trait does not propagate, but as we mentioned in Section \ref{sec rap} it is sufficient to have $\alpha_1 S_0 - \mu_1 \leq 0$.

Now, as we can see, Theorem \ref{main th} proves that our heuristic is true provided that \eqref{hyp} and \eqref{hyp 2} hold true. These restrictions are actually necessary in the sense that the result may be false without them. Indeed, hypothesis \eqref{hyp} says that when a trait propagates, the speed of the other traits that could propagate are ``sufficiently" smaller. If two traits have almost equal speeds, then the two fronts that should arise can actually interfere, and the slower front could accelerate. This phenomenon was observed for instance in the papers \cite{Lam,Lam2}, in the case of competition models. Actually, in the case where there are two traits (i.e., $N=2$ in \eqref{syst}), then we believe that one could use the results presented in these papers to get rid of \eqref{hyp} and \eqref{hyp 2} in Theorem \ref{main th} (but then the traits that propagate are not given bu our propagation sequence). When $N>2$, the situation seems more intricate, and we leave it for further study.\\

Once that we have Theorem \ref{main th}, we can use it to obtain some qualitative properties of our model. Indeed, this theorem tells us how the solutions behave according to the values of the parameters: if we want to know which traits propagate, or how many individuals are contaminated or left untouched after the epidemic, then we can compute the propagation sequences. However, these can be rather intricate to compute. We do not plan here to give a complete picture of the qualitative properties of the model, we leave it for further study, but we will nevertheless show three properties who shed light on some aspects of the system \eqref{syst}.

First, we shall see that, when several strains can propagate, then the basic reproduction number of each strain does not indicate which one can propagate.

More precisely, we shall construct examples of situations with two traits, one which has a large basic reproduction number, and one with a small reproduction number, and where the trait with the small basic reproduction number propagates, while the other fades off. This is the object of Corollary \ref{cor 1} below.

Second, we prove that, when several traits propagate, each one is less contagious (i.e., it has a lower contamination rate $\alpha_i$) and is less ``mortal" in the sense that its remission rate $\mu_i$ is lesser, than each trait that propagates before. This result is the Corollary \ref{cor 2} below.

Third, we study how the number of individuals left untouched after the epidemic depends on the number of susceptible individuals initially present. We will consider the case with two traits. The picture here can be rather different from the case with a single trait. This is the object of Corollary \ref{cor 3}.

We give the precise statements and the proofs of this three properties, which are corollaries of Theorem \ref{main th}, in Section \ref{sec quali}. First, we dedicate the next Section \ref{sec proof}, to the proof of Theorem \ref{main th}.

\section{Proof of Theorem \ref{main th}}\label{sec proof}

\subsection{Outline of the proof}

This section is dedicated to the proof of our main result, Theorem \ref{main th}. In this whole section, the parameters $S_0,(\alpha_i)_{i\in\llbracket 1, N \rrbracket},(\mu_i)_{i\in\llbracket 1, N \rrbracket},(d_i)_{i\in\llbracket 1, N \rrbracket}$ are fixed, strictly positive constants, and $(k_i)_{i\in\llbracket 1,p\rrbracket}$, $(c_i)_{i\in\llbracket 1,p\rrbracket}$, $(\rho_i)_{i\in\llbracket 1,p\rrbracket}$, $(S_i)_{i\in\llbracket 0,p\rrbracket}$ denote the propagation sequences given in Definition \ref{def prop seq}. We also assume that the initial data for the $(I_i)_{i\in \llbracket 1,N\rrbracket}$, denoted $(I^0_i)_{i\in \llbracket 1,N\rrbracket}$, are continuous, non-zero, non-negative and compactly supported.

We prove Theorem \ref{main th} by (finite) induction. For $j \in \llbracket 0, p \rrbracket$, we denote $(H_j)$ the induction hypothesis:
 \begin{equation}\label{Hj}
     \text{ The traits }\ k_1,\ldots,k_j \ \text{propagate with speeds}\ c_1,\ldots, c_j \ \text{and asymptotic values}\ \r_1,\ldots,\r_j. \tag{$H_j$}
 \end{equation}

When $j=0$, we use the convention that the hypothesis $(H_0)$ is empty (that is, when we write: assume \eqref{Hj}, when $j=0$, we have nothing to assume).\\

To prove Theorem \ref{main th}, we will prove first that $(H_p)$ holds true, and then that every trait $k\neq k_1,\ldots,k_p$ does not propagate. A key point in the proof will be to obtain suitable exponential estimates on the functions $I_k,R_k$, for all $k$. This is the object of the following:

\begin{prop}\label{prop exp}
Let $(S,I_1,\ldots, I_N,R_1,\ldots,R_N)$ be the solution of \eqref{syst} arising from the initial datum $(S_0,I_1^0,\ldots, I_N^0,0,\ldots,0)$.

Assume that the separation of speeds hypothesis \eqref{hyp} holds true and that the induction hypothesis \eqref{Hj} is verified for some $j\in \llbracket 0,p\rrbracket$.

Then, for all $k \neq k_1, \ldots,k_j$, for all $\e>0$ small enough, there are $A^\e_{k,j},B^\e_{k,j}>0$ such that
$$
I_k(t,x) \leq A^\e_{k,j} e^{-\frac{(c_k(S_j)+\e)}{2d_k}\big(\vert x \vert - (c_k(S_j)+\e)t\big)},\quad \text{for all } \ t>0,\ x \in \R,
$$
and
$$
R_k(t,x) \leq B^\e_{k,j} e^{-\frac{(c_k(S_j)+\e)}{2d_k}\big(\vert x \vert - (c_k(S_j)+\e)t\big)},\quad \text{for all } \ t>0,\ x \in \R.
$$
\end{prop}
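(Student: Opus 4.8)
The plan is to prove the exponential estimate on $I_k$ first and then deduce the one on $R_k$ by a comparison with a linear equation, arguing by induction on $j$ (the same induction that carries Theorem \ref{main th}). Fix $k\neq k_1,\dots,k_j$ and $\e>0$ small. The starting observation is that $I_k$ satisfies the linear inequality $\partial_t I_k \le d_k\Delta I_k + (\alpha_k S - \mu_k) I_k$, since $S\le S_0$ always; but this crude bound only gives the speed $c_k(S_0)$, not $c_k(S_j)$. To get the sharper bound I need to use the induction hypothesis \eqref{Hj}: the traits $k_1,\dots,k_j$ have already propagated with their asymptotic values, which forces $S(t,x)$ to be essentially below $S_j$ in the bulk region $\{|x|<(c_j-\eta)t\}$, while far ahead, $\{|x|>(c_j+\eta)t\}$, one does not yet control $S$ from above by anything better than $S_0$. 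The key point is that in that far region, the separation-of-speeds hypothesis \eqref{hyp} guarantees $c_k(S_{j-1})$ (hence a fortiori $c_k(S_0)$, after re-indexing; more precisely one works layer by layer) is small enough compared to $c_j$ that a travelling supersolution of the form $A e^{-\lambda(|x|-ct)}$ with $c$ slightly above $c_k(S_j)$ still dominates: the faster trait $k_j$ has already swept through and depleted $S$ before trait $k$'s would-be front, tuned to speed $c_k(S_j)$, can catch up.

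Concretely, the main step is the construction of the supersolution. I would look for $\bar I_k(t,x) = A e^{-\frac{(c_k(S_j)+\e)}{2d_k}(|x|-(c_k(S_j)+\e)t)}$ and check that, for $A$ large enough (to dominate the compactly supported datum $I_k^0$ on the initial slice and near $x=0$ where $|x|$ is not smooth), it is a supersolution of $\partial_t I_k = d_k\Delta I_k + (\alpha_k S - \mu_k)I_k$. Away from $x=0$ the function $e^{-\lambda(|x|-ct)}$ with $\lambda = \frac{c}{2d_k}$, $c = c_k(S_j)+\e$ satisfies $\partial_t w - d_k\Delta w = (\lambda c - d_k\lambda^2) w = \big(\tfrac{c^2}{4d_k}\big)w$; so I need $\tfrac{c^2}{4d_k}\ge \alpha_k S - \mu_k$ wherever $\bar I_k$ is not already obviously above $I_k$. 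In the region where $S\le S_j$ this reads $\tfrac{(c_k(S_j)+\e)^2}{4d_k} \ge \alpha_k S_j - \mu_k = \tfrac{c_k(S_j)^2}{4d_k}$, which holds. In the complementary region $S$ may be as large as $S_0$, but there — thanks to \eqref{hyp} — one shows that $I_k$ is already exponentially small because it had to travel there from the support of $I_k^0$ against an environment where $S\le S_0$ at speed at most $c_k(S_0)$, and $c_k(S_0)$-travel cannot reach $\{|x|>(c_j-\eta)t\}$ faster than the $\bar I_k$ profile decays there: this is exactly the content of $c_k(S_{i-1})+c_k(S_i)<c_i$ summed telescopically. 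I would make this precise by running the argument layer by layer over $i=1,\dots,j$, using at layer $i$ the already-established estimate with $S_{i-1}$ in place of $S_j$ together with the fact that front $k_i$ sits at position $\sim c_i t$; the gap condition \eqref{hyp} is what makes the two exponential profiles compatible at the interface.

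Once $I_k \le A^\e_{k,j} e^{-\frac{(c_k(S_j)+\e)}{2d_k}(|x|-(c_k(S_j)+\e)t)} =: \bar I_k$ is established, the bound on $R_k$ follows by integrating: $R_k(t,x) = \mu_k\int_0^t I_k(s,x)\,ds \le \mu_k \int_0^\infty \bar I_k(s,x)\,ds$, and since $\bar I_k(s,x) = A^\e_{k,j} e^{-\frac{c}{2d_k}|x|} e^{\frac{c^2}{2d_k}s}$ does not decay in $s$, this direct integration diverges — so instead I integrate along the moving frame: writing $\xi = |x| - (c_k(S_j)+\e)t$ with $\e$ itself slightly shrunk to $\e' < \e$, one has $\bar I_k$ with parameter $\e$ dominating an integrable-in-time profile in the frame moving at speed $c_k(S_j)+\e'$, because the exponent $-\frac{(c_k(S_j)+\e)}{2d_k}(|x|-(c_k(S_j)+\e)t)$ evaluated along $|x| = (c_k(S_j)+\e')t + \xi_0$ equals $-\frac{(c_k(S_j)+\e)}{2d_k}\big(\xi_0 - (\e-\e')t\big)$, which decays like $e^{-\delta t}$ for $\delta = \frac{(c_k(S_j)+\e)(\e-\e')}{2d_k}>0$. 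Integrating this in time gives $R_k(t,x)\le B^{\e'}_{k,j} e^{-\frac{(c_k(S_j)+\e')}{2d_k}(|x|-(c_k(S_j)+\e')t)}$, which is the claimed bound after renaming $\e'\to\e$.

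The main obstacle, I expect, is the layer-by-layer bookkeeping in the supersolution step: one must carefully track, for each $i\le j$, the spatial location of the $i$-th front (which by \eqref{Hj} is near $|x|=c_i t$ with the front interface itself spreading sublinearly, so some uniform-in-time control on the width of the transition layers is needed), and then verify that the candidate exponential profile for trait $k$ lies above $I_k$ on the union of all the ``ahead'' regions $\{|x|>(c_i+\eta)t\}$ simultaneously. Making the constants $A^\e_{k,j}$ independent of $t$ requires that the various exponential profiles match up at the moving interfaces, and this matching is precisely where hypothesis \eqref{hyp}, in the telescoped form $\sum$ (or rather the pairwise form $c_k(S_{i-1})+c_k(S_i)<c_i$), is used and cannot be dispensed with. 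A secondary technical point is the non-smoothness of $|x|$ at the origin, handled as usual by noting $\Delta e^{-\lambda|x|} = \lambda^2 e^{-\lambda|x|} - 2\lambda\delta_0 \le \lambda^2 e^{-\lambda|x|}$ in the distributional sense, so the supersolution property is only helped at $x=0$.
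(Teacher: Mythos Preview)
Your approach for the $I_k$ bound is essentially the paper's: both argue by induction on $j$, at each layer using the already-established estimate with speed $c_k(S_{j-1})$ outside the cone $\{|x|<(c_j-\eta)t\}$, the differential inequality $\partial_t I_k \le d_k\Delta I_k + (\alpha_k \theta S_j-\mu_k)I_k$ inside the cone (this is Lemma~\ref{lem ineg}), and the separation condition \eqref{hyp} in the pairwise form $c_k(S_{j-1})+c_k(S_j)<c_j$ to match the two on the boundary $|x|=(c_j-\eta)t$. The paper isolates this matching step as a stand-alone technical result (Lemma~\ref{lem tech}): if $u\le A e^{-\frac{c_1}{2d}(x-c_1t)}$ globally and $\partial_t u - d\Delta u \le r u$ on $\{|x|<c_2 t,\ t>T\}$ with $c_3:=2\sqrt{rd}$, and if $c_1>c_3$ and $c_1+c_3<c_2$, then $u\le B e^{-\frac{c_3}{2d}(x-c_3t)}$. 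This is exactly the comparison-in-a-cone argument you sketch, and the condition $c_1+c_3<c_2$ is precisely \eqref{hyp} at step $j$. Your description of a single global supersolution is slightly misleading---you cannot make $\bar I_k$ a supersolution of the $I_k$ equation outside the cone, only compare $I_k$ to $\bar I_k$ on the cone boundary using the layer-$(j{-}1)$ bound---but your discussion of the ``matching at the moving interfaces'' shows you have the right picture.

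One correction on the $R_k$ step: direct integration does \emph{not} diverge. You integrated to $\infty$ instead of $t$; keeping the upper limit at $t$ gives
\[
R_k(t,x)\ \le\ \mu_k A^\e_{k,j}\, e^{-\frac{c}{2d_k}|x|}\int_0^t e^{\frac{c^2}{2d_k}s}\,ds\ \le\ \mu_k A^\e_{k,j}\,\frac{2d_k}{c^2}\, e^{-\frac{c}{2d_k}(|x|-ct)},
\]
with $c=c_k(S_j)+\e$, which is exactly the claimed bound with $B^\e_{k,j}=\frac{2d_k\mu_k}{c^2}A^\e_{k,j}$. The paper does this in one line after stating the proposition. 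Your $\e'<\e$ detour is correct but unnecessary.
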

This proposition will allow us to obtain upper bounds on the speed of propagation. 

Observe that the estimates on the functions $R_k$ are a consequence of the estimates on the $I_k$. Indeed, we have $\partial_t R_k = \mu_k I_k$, integrating the estimate on $I_k$ yields the same estimate on $R_k$ with $B^\e_{k,j} = \mu_k A^\e_{k,j}\frac{2 d_k}{(c_k(S_k)+\e)^2} $.

We repeat that the hypothesis \eqref{hyp} is indeed necessary to have this result: it would be false otherwise.

Let us mention that, in the above Proposition \ref{prop exp}, it is possible to have $c_k(S_j)=0$ for some $k,j$. In this case, the exponential in the proposition travels with a speed $\e>0$ as small as we want.

 The proof of Proposition \ref{prop exp} is done in several steps, each one being a lemma. First, we will rewrite the system \eqref{syst} under a different form, more precisely, we will write it as a system implying only the functions $R_k$, $k\in\llbracket 1, N \rrbracket$. This is the object of Lemma \ref{lem rw}.

 Then, we will see that this lemma implies that the functions $I_k,R_k$ are respectively subsolutions and supersolutions of some equations, this is done in Lemma \ref{lem ineg}. Combining these two lemmas with another technical result (Lemma \ref{lem tech}), we will finally get Proposition \ref{prop exp}.

 We then use this proposition to show that, if $(H_j)$ is verified for some $j\in \llbracket 0,p-1\rrbracket$, then $(H_{j+1})$ also holds true. This is the object of Proposition \ref{prop under} below. Eventually, this yields that $(H_p)$ holds true.

 Finally, we prove in Proposition \ref{prop no prop} that the traits $k\neq k_1,\ldots,k_p$ do not propagate, and this will conclude the proof of Theorem \ref{main th}.

 \subsection{Proof of the exponential estimates, Proposition \ref{prop exp}}
 
We start with rewriting the system \eqref{syst}. The next lemma strongly relies on the fact that there is no diffusion on the susceptible individuals.

\begin{lemma}\label{lem rw}
Let $(S,I_1,\ldots, I_N,R_1,\ldots,R_N)$ be the solution of \eqref{syst} arising from the initial datum $(S_0,I_1^0,\ldots, I_N^0,0,\ldots,0)$.

\begin{itemize}
    \item The function $S(t,x)$ can be written as
    $$
    S(t,x) = S_0 \prod_{i=1}^N e^{-\frac{\alpha_i}{\mu_i}R_i(t,x)},\quad \forall t>0, \ x\in \R.
    $$
    
    \item For any $k\in \llbracket 1,N\rrbracket$, for $t>0, x\in \R$, we have
    $$
    \partial_t R_k = d_k \Delta R_k + \mu_kI_k^0-\mu_kR_k + \alpha_kS_0 \int_0^t e^{-\frac{\alpha_k}{\mu_k}R_k}\partial_t R_k \left(\prod_{i\neq k}e^{-\frac{\alpha_i}{\mu_i}R_i}\right)d\tau.
    $$
\end{itemize}
\end{lemma}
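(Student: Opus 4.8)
The key observation is that the first equation of \eqref{syst} is an ODE in $t$ for each fixed $x$ (no diffusion on $S$), which can be integrated explicitly. Writing $\partial_t S = -S\sum_k \alpha_k I_k$ as $\partial_t(\log S) = -\sum_k \alpha_k I_k$ and integrating from $0$ to $t$ gives
$$
S(t,x) = S_0 \exp\left(-\sum_{k=1}^N \alpha_k \int_0^t I_k(\tau,x)\,d\tau\right).
$$
Now I use the third equation, $\partial_t R_k = \mu_k I_k$ with $R_k(0,x)=0$, which yields $R_k(t,x) = \mu_k\int_0^t I_k(\tau,x)\,d\tau$, hence $\alpha_k\int_0^t I_k\,d\tau = \frac{\alpha_k}{\mu_k}R_k(t,x)$. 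Substituting this into the expression for $S$ gives exactly $S(t,x) = S_0\prod_{i=1}^N e^{-\frac{\alpha_i}{\mu_i}R_i(t,x)}$, which is the first bullet point. (One should note that $I_k \geq 0$ by the maximum principle applied to the $I_k$ equation with nonnegative initial datum, so all the integrals converge and the manipulations are legitimate; this positivity is implicitly part of the well-posedness already invoked.)

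For the second bullet, I start from the $I_k$ equation $\partial_t I_k = d_k\Delta I_k + (\alpha_k S - \mu_k)I_k$ and want to turn it into an equation for $R_k$. Since $R_k(t,x) = \mu_k\int_0^t I_k\,d\tau$, I have $\partial_t R_k = \mu_k I_k$ and $\Delta R_k = \mu_k\int_0^t \Delta I_k\,d\tau$, so $\Delta R_k = \mu_k\int_0^t \frac{1}{d_k}\big(\partial_t I_k - (\alpha_k S - \mu_k)I_k\big)\,d\tau$. It is cleaner, though, to integrate the $I_k$ equation directly: from $\partial_t I_k = d_k\Delta I_k + \alpha_k S I_k - \mu_k I_k$, integrating in $\tau$ over $[0,t]$ and using $\mu_k\int_0^t I_k\,d\tau = R_k$, $\mu_k\int_0^t\Delta I_k\,d\tau = \Delta R_k$,
$$
\mu_k I_k(t,x) - \mu_k I_k^0(x) = d_k\Delta R_k(t,x) - \mu_k R_k(t,x) + \alpha_k\mu_k\int_0^t S(\tau,x) I_k(\tau,x)\,d\tau.
$$
Then I replace $\mu_k I_k(t,x)$ on the left by $\partial_t R_k(t,x)$, and in the last integral I substitute $S(\tau,x) = S_0\prod_{i=1}^N e^{-\frac{\alpha_i}{\mu_i}R_i(\tau,x)}$ from the first bullet and $\mu_k I_k(\tau,x) = \partial_t R_k(\tau,x)$, isolating the factor $e^{-\frac{\alpha_k}{\mu_k}R_k}$ from the product. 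After dividing through by $\mu_k$ this gives
$$
\partial_t R_k = d_k\Delta R_k + \mu_k I_k^0 - \mu_k R_k + \alpha_k S_0\int_0^t e^{-\frac{\alpha_k}{\mu_k}R_k(\tau,x)}\,\partial_\tau R_k(\tau,x)\left(\prod_{i\neq k} e^{-\frac{\alpha_i}{\mu_i}R_i(\tau,x)}\right)d\tau,
$$
which is the claimed identity.

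There is no real obstacle here; the only points requiring a modicum of care are justifying the interchange of $\Delta$ and $\int_0^t$ (legitimate by parabolic regularity of $I_k$, which is $C^1_t C^2_x$ by the cited well-posedness theory) and the nonnegativity of $I_k$ (needed only to know everything is finite and, later, for the sign of the various terms). If one wants to avoid differentiating under the integral sign, the alternative is to verify the integrated identity above and observe that both sides are $C^1$ in $t$, then differentiate; but the direct integration of the $I_k$ equation as written is cleanest. The content of the lemma is essentially the elimination of $S$ and $I_k$ in favor of $R_k$ alone, made possible precisely because $\partial_t S$ has no spatial operator.
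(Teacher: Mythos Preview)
Your argument is correct and is essentially the paper's own proof: integrate $\partial_t(\log S)=-\sum_i\frac{\alpha_i}{\mu_i}\partial_t R_i$ for the first point, then substitute this expression for $S$ into the $I_k$ equation, multiply by $\mu_k$, and integrate in time using $\partial_t R_k=\mu_k I_k$ for the second. The only quibble is the phrase ``after dividing through by $\mu_k$'', which is not actually needed (the $\mu_k$ in $\alpha_k\mu_k\int_0^t S I_k$ is absorbed by $\mu_k I_k=\partial_\tau R_k$); the final displayed identity is already correct as written.
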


\begin{proof}
The proof follows from the simple observation that the first equation in \eqref{syst}, giving $S(t,x)$, can be rewritten, using the equations for $R_1,\ldots,R_N$, as
$$
\frac{\partial_t S}{S} = -\sum_{i=1}^N \frac{\alpha_i}{\mu_i} \partial_t R_i.
$$
Integrating this relation directly gives the result.

To get the second point, we simply replace $S(t,x)$ by the expression we just found in the equation giving $I_k$, thus obtaining
$$
\partial_t I_k = d_k \Delta I_k +\left(\alpha_k S_0\prod_{i=1}^Ne^{-\frac{\alpha_i}{\mu_i}R_i} - \mu_k\right) I_k.
$$
Then, we multiply by $\mu_k$ and we integrate over the $t$ variable, from $0$ to some $t>0$. Remembering that $\partial_tR_k =\mu_k I_k$, we find the result.
\end{proof}

The next lemma builds on the previous one, and shows that the functions $I_k,R_k$, $k\in \llbracket 1,N\rrbracket$, are respectively subsolutions/supersolutions of some parabolic equations on some domains.
\begin{lemma}\label{lem ineg}
Let $(S,I_1,\ldots, I_N,R_1,\ldots,R_N)$ be the solution of \eqref{syst} arising from the initial datum $(S_0,I_1^0,\ldots, I_N^0,0,\ldots,0)$.\\

Assume that the induction hypothesis \eqref{Hj} holds true for some $j\in \llbracket 0,p\rrbracket$. Then, the following inequalities hold true:
\begin{itemize}
    \item For every $\theta >1$, $\e>0$, there is $T>0$ such that
    $$
    \partial_t I_k \leq d_k \Delta I_k + (\alpha_k S_{j} \theta - \mu_k)I_k,\quad\text{for}\ t>T, \ \vert x \vert < (c_j-\e)t, \ k\in \llbracket 1 , N \rrbracket.
    $$

    \item For $k\neq k_1,\ldots,k_j$, for any $\omega \in (0,1)$, there is $L>0$ such that
$$
    \partial_t R_k \geq d_k \Delta R_k +f_k(R_k,\omega S_j) - \mu_k  S_j \left(\sum_{i\neq k, k_1,\ldots, k_j} \frac{\alpha_i}{\mu_i}R_i \right)R_k,\quad \text{for }\ t>0, \vert x\vert >L.
$$
\end{itemize}
\end{lemma}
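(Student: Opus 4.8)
The proof of Lemma~\ref{lem ineg} will be a direct consequence of Lemma~\ref{lem rw} together with the induction hypothesis~\eqref{Hj}, which controls the behavior of the already-propagating traits $R_{k_1},\ldots,R_{k_j}$ in the relevant space-time region. The key point is that $(H_j)$ tells us precisely how much of the susceptible population has been consumed by the traits $k_1,\ldots,k_j$: in the region $\vert x\vert<(c_j-\e)t$ (behind all the fronts of the propagating traits), for large $t$ each $R_{k_\ell}$ is close to $\rho_\ell$, while in the region $\vert x\vert>L$ (ahead of, or far from, the initial focus) each $R_{k_\ell}$ is small.

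\textbf{First inequality.} Starting from the equation
$$
\partial_t I_k = d_k \Delta I_k + \left(\alpha_k S_0 \prod_{i=1}^N e^{-\frac{\alpha_i}{\mu_i}R_i} - \mu_k\right) I_k
$$
derived in the proof of Lemma~\ref{lem rw}, I would bound the prefactor from above. Since all $R_i\geq 0$, we have $\prod_{i=1}^N e^{-\frac{\alpha_i}{\mu_i}R_i}\leq \prod_{\ell=1}^j e^{-\frac{\alpha_{k_\ell}}{\mu_{k_\ell}}R_{k_\ell}}$, keeping only the propagating traits. By $(H_j)$, for any $c<c_j$ (take $c=c_j-\e$) one has $R_{k_\ell}(t,x)\to\rho_\ell$ uniformly for $\delta<\vert x\vert<ct$ as $t,\delta\to+\infty$; combined with the definition of the propagation sequence, $S_0\prod_{\ell=1}^j e^{-\frac{\alpha_{k_\ell}}{\mu_{k_\ell}}\rho_\ell}=S_j$. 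Hence, given $\theta>1$ and $\e>0$, there is $T>0$ so that $S_0\prod_{i=1}^N e^{-\frac{\alpha_i}{\mu_i}R_i}\leq S_j\theta$ for $t>T$ and $\vert x\vert<(c_j-\e)t$. One has to be slightly careful about the region $\vert x\vert<\delta$ near the initial focus: there $R_k$ is even larger, so the product is even smaller, and the bound still holds. Plugging this in gives $\partial_t I_k\leq d_k\Delta I_k+(\alpha_k S_j\theta-\mu_k)I_k$ on the stated set.

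\textbf{Second inequality.} Here I would use the integral identity from the second point of Lemma~\ref{lem rw}:
$$
\partial_t R_k = d_k\Delta R_k+\mu_k I_k^0-\mu_k R_k+\alpha_k S_0\int_0^t e^{-\frac{\alpha_k}{\mu_k}R_k}\partial_t R_k\Big(\prod_{i\neq k}e^{-\frac{\alpha_i}{\mu_i}R_i}\Big)d\tau.
$$
In the region $\vert x\vert>L$ with $L$ large, the compactly supported data $I_k^0$ vanishes, so that term drops. For the integral term I would split the product $\prod_{i\neq k}$ into the propagating indices $k_1,\ldots,k_j$ and the rest. Using $\partial_\tau R_k\geq 0$, keeping the factors $e^{-\frac{\alpha_{k_\ell}}{\mu_{k_\ell}}R_{k_\ell}}$ and bounding the remaining factors $e^{-\frac{\alpha_i}{\mu_i}R_i}\geq 1-\frac{\alpha_i}{\mu_i}R_i$ (or rather $\prod_{i\neq k,k_1,\ldots,k_j}e^{-\frac{\alpha_i}{\mu_i}R_i}\geq 1-\sum_{i\neq k,k_1,\ldots,k_j}\frac{\alpha_i}{\mu_i}R_i$), and using that by $(H_j)$ the propagating $R_{k_\ell}$ are close to $\rho_\ell$ (hence $S_0\prod_\ell e^{-\frac{\alpha_{k_\ell}}{\mu_{k_\ell}}R_{k_\ell}}\geq \omega S_j$ for $\vert x\vert>L$, $L$ large, any $\omega<1$), one computes $\alpha_k\int_0^t e^{-\frac{\alpha_k}{\mu_k}R_k}\partial_\tau R_k\,d\tau=\mu_k(1-e^{-\frac{\alpha_k}{\mu_k}R_k})$. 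Assembling these lower bounds produces exactly
$$
\partial_t R_k\geq d_k\Delta R_k+f_k(R_k,\omega S_j)-\mu_k S_j\Big(\sum_{i\neq k,k_1,\ldots,k_j}\frac{\alpha_i}{\mu_i}R_i\Big)R_k.
$$

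\textbf{Main obstacle.} The delicate point is the uniformity of the estimates coming from $(H_j)$: the convergence $R_{k_\ell}\to\rho_\ell$ only holds in the moving annulus $\delta<\vert x\vert<(c_j-\e)t$ with $\delta\to+\infty$, so I need to check that the three problem regions — near the initial focus ($\vert x\vert<\delta$), behind all fronts, and the threshold $L$ versus the time $T$ — can be handled simultaneously. Near the focus the $R_{k_\ell}$ are \emph{above} $\rho_\ell$ eventually, which only helps the first inequality (smaller product) and, for the second inequality, is exactly why we restrict to $\vert x\vert>L$. The second subtlety is that the linearization $e^{-u}\geq 1-u$ is only useful when the non-propagating $R_i$ are small; this is legitimate for $\vert x\vert>L$ by the induction setup but should be stated carefully, and it is the reason the error term is quadratic in the $R_i$'s rather than linear. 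I do not expect genuine difficulty beyond careful bookkeeping of these regions and the order of limits.
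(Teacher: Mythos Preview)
Your proposal is correct and follows essentially the same route as the paper's proof: both start from Lemma~\ref{lem rw}, use $(H_j)$ to control the propagating $R_{k_\ell}$ (from below in the cone $\vert x\vert<(c_j-\e)t$ for the first point, from above via time-monotonicity for $\vert x\vert>L$ in the second), and for the second inequality pull the full product $\prod_{i\neq k}e^{-\frac{\alpha_i}{\mu_i}R_i(\tau,x)}$ out of the time integral by replacing $R_i(\tau,x)$ with $R_i(t,x)$ --- note this uses the monotonicity of \emph{all} the $R_i$, not only of $R_k$ --- before applying $e^{-u}\geq 1-u$ to the non-propagating factors. Your flagged ``main obstacle'' about the region $\vert x\vert<\delta$ is a point the paper handles with the same brevity.
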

When $j=0$, the first point in the above lemma is not well-written ($c_j$ is not defined when $j=0$). In this case, we use the convention that the first point should read: for every $\theta >1$, $\e>0$, there is $T>0$ such that
    $$
    \partial_t I_k \leq d_k \Delta I_k + (\alpha_k S_{0} \theta - \mu_k)I_k,\quad\text{for}\ t>T, \  \ k\in \llbracket 1 , N \rrbracket,
    $$
    that is, there is no restriction on $x$.
    
    The second point reads similarly when $j=0$, the sum in the right hand term simply runs on all $i\in \llbracket 1, N\rrbracket\backslash k$.

\begin{proof}
Let $j\in \llbracket 0,p\rrbracket$ be fixed and assume that \eqref{Hj} holds true. We denote $K_j := \{k_1,\ldots,k_j\}$ and $K_j^c := \llbracket 1,N\rrbracket \backslash K_j$ (if $j=0$, then $K_j = \emptyset$).\\

\emph{First point.}  We already know from Lemma \ref{lem rw} that, for all $k\in\ll 1,N\rr$,
$$
\partial_t I_k = d_k \Delta I_k + \alpha_k S_0 \left(\left(\prod_{i\in K_j} e^{-\frac{\alpha_i}{\mu_i}R_i}\right)\left( \prod_{i\in K_j^c} e^{-\frac{\alpha_i}{\mu_i}R_i}\right) - \mu_k\right) I_k.
$$
Owing to \eqref{Hj}, the traits $i\in K_j$ all propagate, with speed at least $c_j$ (because $c_1>c_2>\ldots >c_j$), hence, for every $\eta,\e>0$, there is $T>0$ such that
$$
R_i(t,x) \geq \r_i - \eta,\quad \text{ for all }\ i\in K_j, \ t>T,\ \vert x \vert < (c_j -\e)t.
$$
Then, for $t>T, \vert x \vert< (c_j-\e)t$, we have
$$
\prod_{i\in K_j} e^{-\frac{\alpha_i}{\mu_i}R_i(t,x)} \leq \prod_{i\in K_j} e^{-\frac{\alpha_i}{\mu_i}\r_i}  \prod_{i\in K_j} e^{\frac{\alpha_i}{\mu_i}\eta} 
$$
Take $\eta>0$ small enough so that $\prod_{i\in K_j} e^{\frac{\alpha_i}{\mu_i}\eta} \leq \theta$. On the other hand, we bound from above $\prod_{i\in K_j^c} e^{-\frac{\alpha_i}{\mu_i}R_i(t,x)}$ by $1$ and we recall that, by definition, $S_j = S_0 \prod_{i\in K_j} e^{-\frac{\alpha_i}{\mu_i}\r_i}$, to get 
$$
\partial_t I_k \leq d_k \Delta I_k + (\alpha_k  S_j \theta - \mu_k) I_k,\quad \text{for }\ t>T, \ \vert x \vert < (c_j-\e)t.
$$

\medskip
\emph{Second point.}
Let us take $k\neq k_1,\ldots,k_j$. Owing to Lemma \ref{lem rw}, we have
$$
    \partial_t R_k = d_k \Delta R_k + \mu_kI_k^0-\mu_kR_k + \alpha_kS_0 \int_0^t e^{-\frac{\alpha_k}{\mu_k}R_k}\partial_t R_k \left(\prod_{i\neq k}e^{-\frac{\alpha_i}{\mu_i}R_i}\right)d\tau.
$$
Because all the $R_i$ are non-decreasing with respect to $t$, we find that
\begin{align*}
    \partial_t R_k &\geq d_k \Delta R_k + \mu_kI_k^0-\mu_kR_k + \alpha_kS_0 \left(\prod_{i\neq k}e^{-\frac{\alpha_i}{\mu_i}R_i(t,x)}\right)\int_0^t e^{-\frac{\alpha_k}{\mu_k}R_k}\partial_t R_k d\tau \\
    &\geq d_k \Delta R_k -\mu_kR_k + \mu_kS_0 \left(\prod_{i\neq k}e^{-\frac{\alpha_i}{\mu_i}R_i(t,x)}\right)(1-e^{-\frac{\alpha_k}{\mu_k}R_k}).
\end{align*}
Now, owing to \eqref{Hj}, for $i\in K_j$, we know that $R_i$ propagates and has asymptotic value $\r_i$. Therefore (because $R_i$ is time-nondecreasing), for every $\eta>0$, there is $L>0$ such that
$$
R_i(t,x) \leq \r_i + \eta,\quad \text{for}\ t>0,\ \vert x \vert >L, \ i\in K_j.
$$
This allows us to derive a lower bound on $\prod_{i\neq k}e^{-\frac{\alpha_i}{\mu_i}R_i(t,x)}$ for $t>0$ and $\vert x \vert >L$ (recall that $k\not\in K_j$):
\begin{align*}
\prod_{i\neq k}e^{-\frac{\alpha_i}{\mu_i}R_i(t,x)} &=\prod_{i\in K_j}e^{-\frac{\alpha_i}{\mu_i}R_i(t,x)}\prod_{i\in K_j^c \backslash k}e^{-\frac{\alpha_i}{\mu_i}R_i(t,x)}\\
&\geq \left(\prod_{i\in K_j}e^{-\frac{\alpha_i}{\mu_i}\r_i}\right)\left(\prod_{i\in K_j}e^{-\frac{\alpha_i}{\mu_i}\eta}\right)\left(1-\sum_{i\in K_j^c \backslash k}\frac{\alpha_i}{\mu_i}R_i(t,x)\right).
\end{align*}
Hence, up to taking $\eta$ small enough so that $\prod_{i\in K_j}e^{-\frac{\alpha_i}{\mu_i}\eta}>\omega$, we get, for $t>0$, $\vert x \vert >L$,
\begin{align*}
    \partial_t R_k 
    &\geq d_k \Delta R_k -\mu_kR_k + \mu_kS_0 \omega\prod_{i\in K_j}e^{-\frac{\alpha_i}{\mu_i}\r_i}\left(1-\sum_{i\in K_j^c \backslash k}\frac{\alpha_i}{\mu_i}R_i(t,x)\right)\left(1-e^{-\frac{\alpha_k}{\mu_k}R_k}\right)\\
    &\geq d_k \Delta R_k -\mu_kR_k + \mu_k \omega S_j\left(1-\sum_{i\in K_j^c \backslash k}\frac{\alpha_i}{\mu_i}R_i(t,x)\right)\left(1-e^{-\frac{\alpha_k}{\mu_k}R_k}\right)\\
    &\geq d_k \Delta R_k +f_k(R_k,\omega S_j)  - \mu_k S_j\omega \left(\sum_{i\in K_j^c \backslash k}\frac{\alpha_i}{\mu_i}R_i(t,x)\right)\left(1-e^{-\frac{\alpha_k}{\mu_k}R_k}\right)\\
        &\geq d_k \Delta R_k +f_k(R_k,\omega S_j)  - \alpha_k S_j\left(\sum_{i\in K_j^c \backslash k}\frac{\alpha_i}{\mu_i}R_i(t,x)\right)R_k,
\end{align*}
hence the result.
\end{proof}

Now, in order to prove Proposition \ref{prop exp}, we need the following technical lemma:

\begin{lemma}\label{lem tech}
Let $d,r>0$ be chosen, and let $u(t,x)\geq 0$ be $C^1_tC^2_x$, bounded, such that:
\begin{itemize}
    \item There are $c_1, A>0$ such that, for all $t>0$, $x\in \R$, we have
    $$
u(t,x) \leq A e^{-\frac{c_1}{2d} (x -c_1 t) }.
$$
\item There are $T, c_2>0$ such that, for all $t>T,  \vert x \vert < c_2t$, we have
$$
\partial_t u -d \Delta u \leq r u.
$$
\end{itemize}
In addition, define
$$
c_3 := 2\sqrt{r d}.
$$
Assume that
\begin{equation}\label{sep speed}
    c_1 > c_3 \ \text{and} \ c_2 > c_1 + c_3.
\end{equation}
Then, there is $B>0$ such that, for $t>0$, $x\in \R$,
$$
u(t,x) \leq B e^{- \frac{c_3}{2d} (x - c_3 t)}.
$$
\end{lemma}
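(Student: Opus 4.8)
The plan is to build a supersolution of the parabolic inequality $\partial_t v - d\Delta v \ge r v$ that dominates $u$ on the parabolic boundary of the region $\{t>0,\ \vert x\vert < c_2 t\}$ and has the desired exponential decay, then conclude by the comparison principle. Note first that it suffices to prove the estimate for $x>0$ (and large), since for $x$ bounded or negative the conclusion follows from boundedness of $u$ after adjusting $B$; so I will work in the set where $x$ and $t$ are both positive. The natural candidate is $v(t,x) := B\, e^{-\frac{c_3}{2d}(x - c_3 t)}$, which by the choice $c_3 = 2\sqrt{rd}$ satisfies $\partial_t v - d\Delta v = \big(\frac{c_3^2}{2d} - d\frac{c_3^2}{4d^2}\big) v = \frac{c_3^2}{4d} v = r v$, i.e. $v$ solves the linear equation with equality. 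So $v$ is a supersolution of $\partial_t w - d\Delta w \le rw$ on the whole space, for any $B>0$.

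Next I would check that $v$ dominates $u$ on the parabolic boundary of $\Omega := \{(t,x): t>T,\ 0<x<c_2 t\}$, for a suitable large $B$. There are three pieces of boundary. On $\{t=T\}$ the function $u$ is bounded and $v$ is bounded below by a positive constant on the relevant compact-in-$x$... — more precisely, on $\{t=T,\ x>0\}$ we have $v(T,x) = B e^{-\frac{c_3}{2d}(x-c_3 T)} $, which decays in $x$, so I cannot simply use boundedness of $u$ there; instead I use the first hypothesis $u(T,x)\le A e^{-\frac{c_1}{2d}(x-c_1 T)}$, and since $c_1 > c_3$ the exponent $\frac{c_1}{2d}$ is larger than $\frac{c_3}{2d}$, so $u(T,x)\le v(T,x)$ for all $x\ge 0$ provided $B$ is large enough (compare the two exponentials: $\frac{A e^{-\frac{c_1}{2d}(x - c_1 T)}}{B e^{-\frac{c_3}{2d}(x-c_3 T)}} \to 0$ as $x\to+\infty$ and is bounded on $x\in[0,\infty)$, so one large $B$ works). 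On the lateral boundary $\{x = c_2 t,\ t>T\}$, I again use the first hypothesis: $u(t,c_2 t) \le A e^{-\frac{c_1}{2d}(c_2 - c_1)t}$ while $v(t,c_2 t) = B e^{-\frac{c_3}{2d}(c_2 - c_3)t}$; the hypothesis $c_2 > c_1 + c_3$ gives $\frac{c_1}{2d}(c_2 - c_1) > \frac{c_3}{2d}(c_2-c_3)$ — indeed $c_1(c_2-c_1) - c_3(c_2-c_3) = (c_1-c_3)c_2 - (c_1^2 - c_3^2) = (c_1-c_3)(c_2 - c_1 - c_3) > 0$ — so for $B$ large the inequality $u \le v$ holds on the lateral boundary too. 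Since $u\ge 0$, nothing needs checking at $x=0$ beyond $u(t,0)\le v(t,0)$, which again holds for $B$ large because $v(t,0)= B e^{\frac{c_3^2}{2d}t}\to\infty$ and $u$ is bounded. Then the parabolic comparison principle on $\Omega$ (using that $u$ is bounded, so there is no issue at spatial infinity — or one applies it on truncated domains and passes to the limit) gives $u \le v$ on $\Omega$, i.e. $u(t,x)\le B e^{-\frac{c_3}{2d}(x - c_3 t)}$ for $0 < x < c_2 t$, $t>T$. Finally, on the complementary region ($x\ge c_2 t$, or $t\le T$, or $x\le 0$) I absorb everything into a possibly larger constant $B$: for $x \le 0$ use boundedness of $u$ and that the exponential is $\ge 1$ there; for $t\le T$ use the first hypothesis directly as above; for $x\ge c_2 t$ use the first hypothesis once more, exactly as on the lateral boundary.

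The main obstacle is purely bookkeeping: making sure a single constant $B$ works simultaneously on all pieces of the boundary and in all complementary regions, and correctly invoking the comparison principle on the unbounded domain $\Omega$ (the clean way is to compare $u$ and $v + \delta e^{\lambda t}$ on $\{|x|<M\}$ for large $M$, let $M\to\infty$ using boundedness of $u$, then $\delta\to 0$). There is no conceptual difficulty once the candidate $v$ is identified; the inequalities $c_1>c_3$ and $c_2>c_1+c_3$ are used exactly to make the two boundary exponential comparisons go through, which is presumably why they appear in \eqref{sep speed}. One should also double-check that the hypothesis is stated for $x\in\R$ with $\vert x\vert < c_2 t$ so the symmetric argument for $x<0$ is identical; hence the final bound with $\vert x\vert$ is not claimed here — the lemma only asserts the one-sided bound in $x$, matching how it will be applied in Proposition \ref{prop exp}.
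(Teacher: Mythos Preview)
Your proof is correct and follows essentially the same route as the paper: define the traveling exponential $v(t,x)=Be^{-\frac{c_3}{2d}(x-c_3 t)}$, verify it is an exact solution of $\partial_t v - d\Delta v = rv$, check $u\le v$ on the parabolic boundary of the cone (using the $c_1$-bound on the lateral piece $x=c_2 t$ together with the algebraic identity $c_1(c_2-c_1)-c_3(c_2-c_3)=(c_1-c_3)(c_2-c_1-c_3)>0$), and conclude by comparison. The only cosmetic differences are that the paper works directly on $\{t>T,\ |x|<c_2 t\}$ rather than first restricting to $x>0$, and on the base $\{t=T,\ |x|\le c_2 T\}$ it simply invokes boundedness of $u$ (the set is compact in $x$) rather than the $c_1$-exponential bound; your slightly more explicit handling of the unbounded-domain comparison is a welcome addition but not essential.
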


Observe that in this lemma, we need an hypothesis on $c_1,c_2,c_3$, namely \eqref{sep speed}. When applying this lemma later, this requirement will force us to assume the separation hypothesis \eqref{hyp} on the speeds of each trait.

\begin{proof}
For $B>\max\{ A, \sup_{t,x} u\}$, to be chosen large enough after, define
$$
w(t,x) := B e^{-\frac{c_3}{2 d} (x - c_3 t)}.
$$
Observe first that, for $t>0,  x  > c_2t$, we indeed have $A e^{-\frac{c_1}{2d}(x-c_1 t)}\leq w(t,x)$, and then $w\geq u$ there. For $t>0, x < -c_2 t$, we have $w(t,x) \geq B$, and then $w\geq u$ also in this region. Up to taking $B$ large enough, we can also ensure that $w\geq u$ in the bounded domain $\{ (t,x) \ : t\in (0,T),\ \vert x \vert < c_2t \}$.

It remains to prove that $w\geq u$ on the domain  $E := \{ (t,x) \ : t>T,\ \vert x \vert < c_2t \}$. To do so, we use the parabolic comparison principle (see \cite{PW}).

We have
$$
\partial_t w - d \Delta w = rw,\quad \text{ for } t>0, x\in \R.
$$
By hypothesis, we know that $u$ is subsolution of this equation on the domain $E$.

To apply the parabolic comparison principle, we need that $w \geq u$ on $\partial E$, that is, we need to have
$w(T,x)\geq u(T,x)$ for $\vert x \vert \leq c_2T$ and $w(t,x)\geq u(t,x)$ for $t>T$ and $\vert x \vert = c_2 t$.

Because $u$ is bounded, taking $B$ large enough guarantees that the first requirement is verified. For the second requirement, it is sufficient to have
$$
 B e^{-\frac{c_3}{2d} (c_2 - c_3 )t}\geq A e^{-\frac{c_1}{2d} (c_2-c_1)t}, \quad \text{for}\ t>T,
$$
that is, up to increasing $B$, this is true if we have $\frac{c_3}{2d} (c_2 - c_3 )< \frac{c_1}{2d} (c_2-c_1)$, but this condition is guaranteed by \eqref{sep speed}. Then, we can indeed apply the parabolic comparison theorem to $u,w$ on $E$ to get the result.
\end{proof}

We can now finally turn to the proof of Proposition \ref{prop exp}.

\begin{proof}[Proof of Proposition \ref{prop exp}]
We will prove that, under the hypotheses of Proposition \ref{prop exp}, we have, for $\e>0$ small enough,
$$
I_k(t,x) \leq A^\e_{k,j} e^{-\frac{(c_k(S_j)+\e)}{2d_k}( x  - (c_k(S_j)+\e)t)},\quad \text{for all } \ t>0,\ x \in \R.
$$
This is not exactly was is required, because we do not have the absolute value on the $x$ in the exponential. To get it, one can either redo the same arguments with supersolution moving toward the left (that is, supersolutions of the form $e^{\lambda(x+ct)}$), or, equivalently, one can do a change of variable $x\to -x$, that does not change the equations in \eqref{syst} and apply the results to these new equations.

We argue by induction. In this proof, the induction hypothesis is denoted $(P_j)$, for $j\in \llbracket 0,p\rrbracket$ and is the following:
\begin{multline}\label{Pj}
(H_j) \implies  \forall k\neq k_1,\ldots,k_j,\ \forall \e >0 \ \text{small enough},\ \exists A^\e_{k,j}>0 \ \text{such that}\\
I_k(t,x) \leq A^\e_{k,j} e^{-\frac{(c_k(S_j)+\e)}{2d_k}(x - (c_k( S_j)+\e)t)},\quad \text{for all } \ t>0,\ x \in \R.\tag{$P_j$}
\end{multline}

When $j=0$, by convention $(H_0)$ is the void hypothesis, it is always true. Therefore, $(P_0)$ would hold true if we can prove the upper bounds on the $I_k$ functions, for all $k\in\llbracket 1,N\rrbracket$, unconditionally.

To do so, start with observing that $S(t,x)\leq S_0$. This implies that, for every $k\in  \llbracket 1 , N \rrbracket$, $I_k$ satisfies
$$
\partial_t I_k \leq d_k\Delta I_k +(\alpha_k S_0 - \mu_k)I_k,\quad t>0,\ x\in \R.
$$
For any $\e>0$, the function
$$
w(t,x) = Ae^{-\frac{(c_k( S_0)+\e)}{2d_k}(x-(c_k(S_0)+\e)t)}
$$
is supersolution of this equation. Indeed, $\partial_t w - d_k \Delta w = \frac{(c_k(S_0)+\e)^2}{4d_k}w  \geq (\alpha_k S_0 - \mu_k)w$.
Hence, provided $A$ is large enough so that $w(0,\cdot)\geq I_k^0$, the parabolic comparison principle yields the result.\\

Now, assume the property \eqref{Pj} is verified for some $j\in\llbracket 0, p-1\rrbracket$, and let us show that $(P_{j+1})$ is true. 

To do so, we assume that $(H_{j+1})$ holds true. Therefore, $(H_j)$ is verified, and then $(P_j)$ tells us that
for all $ k \neq k_1, \ldots,k_j$, for $\e>0$ small enough, there is $A^\e_{k,j}>0$ such that
$$
I_k(t,x) \leq A^\e_{k,j} e^{-\frac{(c_k( S_j)+\e)}{2d_k}(x - (c_k(S_j)+\e)t)},\quad \text{for all } \ t>0,\ x \in \R.
$$

In addition, because $(H_{j+1})$ is assumed to hold, we can apply Lemma \ref{lem ineg}, to get that, for $\theta >1$, $\eta>0$, there is $T>0$ such that
    $$
    \partial_t I_k \leq d_k \Delta I_k + (\alpha_k  S_{j+1} \theta - \mu_k)I_k,\quad t>T, \ \vert x \vert < (c_{j+1}-\eta)t, \ k\in \llbracket 1 , N \rrbracket.
    $$

Define $r:= \frac{(c_k(S_{j+1})+\e)^2}{4d_k}$. Then, up to taking $\theta$ close enough to $1$, we can ensure that $\alpha_k  S_{j+1} \theta - \mu_k \leq r$, and then
    $$
    \partial_t I_k \leq d_k \Delta I_k + r I_k,\quad t>T, \ \vert x \vert < (c_{j+1}-\eta)t, \ k\in \llbracket 1 , N \rrbracket.
    $$

Now, define
$$
c_1 = c_k(S_j)+\e, c_2 = c_{j+1}-\eta, c_3 = c_k(S_{j+1})+\e.
$$
Then, up to taking $\e,\eta>0$ small enough, we can ensure that
$$
c_1+c_3  <c_2, 
$$
thanks to the hypothesis \eqref{hyp}. We always have $c_1>c_3$, because $S_{j+1}<S_j$. Then, we can apply Lemma \ref{lem tech} and this yields the result.
\end{proof}

\subsection{Conclusion. Proof of Theorem \ref{main th}}

We now use Proposition \ref{prop exp} to prove that the hypothesis $(H_p)$ is true, that is, the traits $k_1,\ldots,k_p$ computed by the propagation sequences indeed propagate, with their prescribed speeds to their prescribed asymptotic values. To prove this, we argue by induction again, this is the object of the following:
\begin{prop}\label{prop under}
Let $(S,I_1,\ldots, I_N,R_1,\ldots,R_N)$ be the solution of \eqref{syst} arising from the initial datum $(S_0,I_1^0,\ldots, I_N^0,0,\ldots,0)$.

Assume that \eqref{Hj} holds true for some $j\in\llbracket 0,p-1\rrbracket$. Then the trait $k_{j+1}$ propagates with speed $c_{j+1}$ and has asymptotic value $\r_{j+1}$, that is, $(H_{j+1})$ also holds true.
\end{prop}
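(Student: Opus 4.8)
Here I split $(H_{j+1})$ into an \emph{upper} bound (the trait $k_{j+1}$ does not invade faster than $c_{j+1}$, and $R_{k_{j+1}}$ does not overshoot $\rho_{j+1}$) and a \emph{lower} bound (it does invade at speed $c_{j+1}$ and reaches the level $\rho_{j+1}$). The bound $\sup_{|x|>ct}R_{k_{j+1}}\to 0$ for $c>c_{j+1}$ is immediate from Proposition \ref{prop exp} applied with this $j$ and $k=k_{j+1}$. For the rest I would reduce to a scalar problem: since $c_1>\dots>c_j>c_{j+1}$, the hypothesis $(H_j)$ forces $R_{k_i}(t,x)\ge\rho_i-\eta$ for $i\le j$ on the moving region $\{|x|<(c_j-\eta)t\}$ (which contains the $k_{j+1}$-front), so by Lemma \ref{lem rw} there $S(t,x)=S_0\prod_i e^{-\frac{\alpha_i}{\mu_i}R_i}\le(1+\kappa)S_j$ and $R_{k_{j+1}}$ is, for $t$ large, a subsolution of the scalar KPP equation $\partial_t u=d_{k_{j+1}}\Delta u+f_{k_{j+1}}(u,(1+\kappa)S_j)+\mu_{k_{j+1}}I^0_{k_{j+1}}$, while outside that region it is small by Proposition \ref{prop exp}. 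Comparing with the bounded steady state of this scalar equation (whose limit at $\pm\infty$ is $\rho_{k_{j+1}}((1+\kappa)S_j)$) and letting $\kappa\downarrow0$ gives $\limsup_{|x|\to\infty}\big(\lim_{t\to\infty}R_{k_{j+1}}(t,x)\big)\le\rho_{j+1}$, and hence $R_{k_{j+1}}(t,x)\le\rho_{j+1}+o_\delta(1)$ uniformly in $t$ for $|x|>\delta$ (using $\partial_tR_{k_{j+1}}\ge0$).

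The lower bound is the heart of the matter. Lemma \ref{lem ineg} (second point), together with $(H_j)$, gives on $\{|x|>L\}$
\[
\partial_t R_{k_{j+1}}\ \ge\ d_{k_{j+1}}\Delta R_{k_{j+1}}+f_{k_{j+1}}(R_{k_{j+1}},\omega S_j)-\alpha_{k_{j+1}}S_j\Big(\sum_{i\notin\{k_1,\dots,k_{j+1}\}}\frac{\alpha_i}{\mu_i}R_i\Big)R_{k_{j+1}},
\]
and the genuine KPP part $f_{k_{j+1}}(\cdot,\omega S_j)$ has spreading speed $\to c_{j+1}$ and positive zero $\to\rho_{j+1}$ as $\omega\uparrow1$, so everything comes down to controlling the competition term over the slower traits — and this is exactly the step I expect to be the main obstacle, and where the separation hypothesis \eqref{hyp} is genuinely used. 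By definition of the propagation sequences one has $\bar c:=\max_{i\notin\{k_1,\dots,k_{j+1}\}}c_i(S_j)<c_{j+1}$ (the argmax defining $k_{j+1}$ is attained only there), and Proposition \ref{prop exp} at level $j$ bounds each such $R_i$ by $B^\e_i e^{-\frac{c_i(S_j)+\e}{2d_i}(|x|-(c_i(S_j)+\e)t)}$, so for $\e$ small the whole sum is $\le\e$ on the moving half-space $\{|x|\ge\bar ct+C(\e)\}$. Hence on $\{|x|>\max(L,\bar ct+C(\e))\}$, $R_{k_{j+1}}$ is a genuine supersolution of a KPP equation whose reaction is $f_{k_{j+1}}(\cdot,\omega S_j)$ up to an $O(\e)$ term, still with spreading speed $\to c_{j+1}$ and positive zero $\to\rho_{j+1}$ as $\omega\uparrow1,\e\downarrow0$. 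Since $R_{k_{j+1}}(t,\cdot)>0$ for $t>0$ by the strong maximum principle, I would plant a small compactly supported moving subsolution of this KPP equation at a fixed large position, travelling at a speed $c'\in(\bar c,c_{j+1})$ close to $c_{j+1}$; because $c'>\bar c$ its support stays inside the good region for all large $t$, so the parabolic comparison principle (used exactly as in Lemma \ref{lem tech}) forces $R_{k_{j+1}}\ge\e_1>0$ on a region expanding at speed $c'$ outside a fixed ball, and the classical KPP ``lifting'' then pushes $R_{k_{j+1}}$ up to $\rho_{j+1}-o(1)$ behind that front. Combined with $\partial_tR_{k_{j+1}}\ge0$ this yields $\liminf_{|x|\to\infty}\big(\lim_{t\to\infty}R_{k_{j+1}}(t,x)\big)\ge\rho_{j+1}$, hence $\rho_{k_{j+1}}=\rho_{j+1}>0$: the trait propagates.

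It remains to upgrade these pointwise facts to the uniform statements of Definition \ref{def}, i.e.\ $\sup_{\delta<|x|<ct}|R_{k_{j+1}}-\rho_{j+1}|\to0$ as $t,\delta\to\infty$ for each $c<c_{j+1}$. The upper side is already in hand. For the lower side it suffices, by monotonicity of the regions in $c$, to treat $c\in(\bar c,c_{j+1})$: the part $\{\bar ct+C<|x|<ct\}$ is covered by the moving subsolution above, and on the inner strip $\{\delta<|x|<\bar ct+C\}$ (nonempty and expanding when $\bar c>0$) I would argue by convergence to equilibrium — once the asymptotic value is known, the identity $\alpha_{k_{j+1}}S_{j+1}-\mu_{k_{j+1}}=\partial_\rho f_{k_{j+1}}(\rho_{j+1},S_j)<0$ (concavity of $\rho\mapsto f_{k_{j+1}}(\rho,S_j)$) makes $I_{k_{j+1}}$ a subsolution of a dissipative equation $\partial_t I\le d_{k_{j+1}}\Delta I-\beta I$ behind the $k_{j+1}$-front, so that $\int_t^\infty\mu_{k_{j+1}}I_{k_{j+1}}(\tau,x)\,d\tau=\lim_{s\to\infty}R_{k_{j+1}}(s,x)-R_{k_{j+1}}(t,x)$ tends to $0$ uniformly there, and with $\lim_{s\to\infty}R_{k_{j+1}}(s,x)\to\rho_{j+1}$ as $|x|\to\infty$ this gives the claim (this last point is somewhat technical and I would carry it out as a short bootstrap alternating the lower bound on $R_{k_{j+1}}$ and the decay of $I_{k_{j+1}}$). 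This proves $(H_{j+1})$. To reiterate: the real difficulty is the competition term in the displayed inequality, and \eqref{hyp}, through Proposition \ref{prop exp}, is precisely what makes it negligible — without it the slower fronts could accelerate and interact with the $k_{j+1}$-front, and the conclusion would fail.
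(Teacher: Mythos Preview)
Your overall architecture matches the paper's: the exterior bound $\sup_{|x|>ct}R_{k_{j+1}}\to0$ comes straight from Proposition~\ref{prop exp}; the upper bound $R_{k_{j+1}}\le\rho_{j+1}+o(1)$ follows from the subsolution inequality on $\{|x|<c_j t\}$ (the paper makes this explicit via Lemma~\ref{lem sub}, second point, and the supersolution $\rho(\theta)+Me^{-\e|x|}$); and for the lower bound the paper, like you, uses Proposition~\ref{prop exp} to kill the competition term $\sum_{i\notin\{k_1,\dots,k_{j+1}\}}\frac{\alpha_i}{\mu_i}R_i$ on $\{|x|>\ul c\,t\}$ for $\ul c>\tilde c$, so that $R_{k_{j+1}}$ dominates a genuine KPP solution $u$ there (with Dirichlet condition on the moving boundary). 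Up to this point the two arguments are the same in spirit.

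The genuine divergence is in how you pass from this to the \emph{uniform} lower bound on the whole region $\{\delta<|x|<ct\}$, in particular on the ``inner strip'' $\{\delta<|x|\lesssim\bar c\,t\}$. Your proposed bootstrap---use $\alpha_{k_{j+1}}S_{j+1}-\mu_{k_{j+1}}<0$ to make $I_{k_{j+1}}$ dissipative behind the front and deduce $\int_t^\infty I_{k_{j+1}}\to0$---is circular as written: to get $S\le S_{j+1}+\eta$ you need $R_{k_{j+1}}\ge\rho_{j+1}-\eta'$ on that region, which is exactly what you are trying to prove. You flag this as ``technical'', but it is not clear the alternation actually closes. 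The paper bypasses this entirely with a one-line monotonicity trick: given $c<c_{j+1}$, pick $\ul c<\ol c$ in $(\max\{\tilde c,c\},c_{j+1})$; for any $(t,x)$ with $\delta<|x|<ct$ one has $t>|x|/\ol c$, so by $\partial_tR_{k_{j+1}}\ge0$,
\[
R_{k_{j+1}}(t,x)\ \ge\ R_{k_{j+1}}\!\left(\frac{|x|}{\ol c},x\right)\ \ge\ u\!\left(\frac{|x|}{\ol c},x\right),
\]
and the point $(|x|/\ol c,x)$ lies on the ray $|x|=\ol c\,s>\ul c\,s$, along which $u(s,\pm\ol c\,s)\to\rho_{j+1}(\omega,\eta)$ by the standard KPP spreading result. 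Letting $\delta\to\infty$ and then $\omega\uparrow1$, $\eta\downarrow0$ finishes. You already invoke $\partial_tR_{k_{j+1}}\ge0$ elsewhere; using it this way replaces your bootstrap with a clean two-line argument and removes the circularity.
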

We start with the following lemma, that uses Proposition \ref{prop exp} to improve Lemma~\ref{lem ineg}.
\begin{lemma}\label{lem sub}
Let $(S,I_1,\ldots, I_N,R_1,\ldots,R_N)$ be the solution of \eqref{syst} arising from the initial datum $(S_0,I_1^0,\ldots, I_N^0,0,\ldots,0)$.

\begin{itemize}
    \item Assume that \eqref{Hj} holds true for some $j\in\llbracket 0,p-1\rrbracket$. Let $\t c = \max_{i\neq k_1,\ldots,k_{j+1}} c_i(S_j) $. For any $\eta>0$, $\omega\in(0,1)$ and for every $c>\tilde c$, there is $T>0$ such that, for  $t>T$ and $\vert x \vert >c t$,
\begin{equation}\label{eq mf}
 \partial_t R_{k_{j+1}} \geq d_{k_{j+1}} \Delta R_{k_{j+1}} +f_{k_{j+1}}(R_{k_{j+1}},\omega S_j) - \eta R_{k_{j+1}}.
\end{equation}
    
    \item Assume that \eqref{Hj} holds true for some $j\in\llbracket 0,p\rrbracket$. For any $\theta>1$, for any $k\neq k_1,\ldots,k_j$, for any $c\in (0,c_j)$, there are $C,q>0$ such that, for $\vert x \vert <c t$,
    
    \begin{equation}\label{R supersol}
    \partial_t R_k \leq d_k \Delta R_k + \mu_kI_k^0 + C e^{-q\vert x \vert} + f_{k}(R_k,\theta S_j).
  \end{equation}
\end{itemize}
\end{lemma}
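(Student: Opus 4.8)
Here is how I would organize the argument; both points are read off from structural facts already in hand, with all error terms absorbed by Proposition~\ref{prop exp} applied at the \emph{same} layer $j$ (legitimate, since $(H_j)$ is assumed and the separation hypothesis~\eqref{hyp} is standing throughout this section).

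\emph{First point.} I would apply the second estimate of Lemma~\ref{lem ineg} with $k=k_{j+1}$ (note $k_{j+1}\notin\{k_1,\dots,k_j\}$), which gives, for $|x|>L$, $\partial_tR_{k_{j+1}}\ge d_{k_{j+1}}\Delta R_{k_{j+1}}+f_{k_{j+1}}(R_{k_{j+1}},\omega S_j)-\mu_{k_{j+1}}S_j\bigl(\sum_{i\neq k_{j+1},k_1,\dots,k_j}\tfrac{\alpha_i}{\mu_i}R_i\bigr)R_{k_{j+1}}$. Every index $i$ occurring in the sum lies outside $\{k_1,\dots,k_{j+1}\}$, hence $c_i(S_j)\le\tilde c<c$. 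Proposition~\ref{prop exp} bounds $R_i(t,x)$ by $B^\e_{i,j}e^{-\frac{c_i(S_j)+\e}{2d_i}(|x|-(c_i(S_j)+\e)t)}$, and on $\{|x|>ct\}$, choosing $\e$ small enough that $c_i(S_j)+\e<c$ (using the arbitrarily-slow-pulse form of the estimate when $c_i(S_j)=0$), the exponent is $\le-\delta_i|x|$ for some $\delta_i>0$. Since $|x|>ct>cT$ there, the whole bracket is $\le\eta$ as soon as $T$ is large (and $\ge L/c$, which makes $|x|>L$ automatic), which is exactly~\eqref{eq mf}.

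\emph{Second point.} I would start from Lemma~\ref{lem rw}, $\partial_tR_k=d_k\Delta R_k+\mu_kI_k^0-\mu_kR_k+\alpha_kS_0\int_0^te^{-\frac{\alpha_k}{\mu_k}R_k}\partial_\tau R_k\bigl(\prod_{i\neq k}e^{-\frac{\alpha_i}{\mu_i}R_i}\bigr)d\tau$, together with the algebraic identity $\int_0^se^{-\frac{\alpha_k}{\mu_k}R_k}\partial_\tau R_k\,d\tau=\tfrac{\mu_k}{\alpha_k}(1-e^{-\frac{\alpha_k}{\mu_k}R_k(s,x)})$ (valid because $R_k(0,\cdot)=0$). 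Bounding $\prod_{i\neq k}e^{-\frac{\alpha_i}{\mu_i}R_i}\le1$ yields $\partial_tR_k\le d_k\Delta R_k+\mu_kI_k^0+f_k(R_k,S_0)$, which already settles the case $\theta S_j\ge S_0$ — in particular $j=0$, where, as in the convention after Lemma~\ref{lem ineg}, there is no constraint on $x$ — by monotonicity of $f_k$ in its second argument. In the remaining case $\theta S_j<S_0$, I would split the $\tau$-integral at $\tau^\ast:=|x|/\gamma$ for a slope $\gamma$ with $\max(c,c_k(S_j))<\gamma<c_j$; such $\gamma$ exists because $c<c_j$ and $c_k(S_j)<c_j$ for $k\notin\{k_1,\dots,k_j\}$ (this last being $c_k(S_j)\le c_{j+1}<c_j$ when $j<p$, or $c_k(S_j)=0$, and $c_k(S_j)=0$ via~\eqref{hyp 2} when $j=p$), and for $|x|<ct$ one has $\tau^\ast<t$. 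On $[\tau^\ast,t]$ I bound $\prod_{i\neq k}e^{-\frac{\alpha_i}{\mu_i}R_i}\le\prod_{i\in\{k_1,\dots,k_j\}}e^{-\frac{\alpha_i}{\mu_i}R_i}$; since each $k_\ell$ with $\ell\le j$ propagates at speed $c_\ell\ge c_j>\gamma$, $(H_j)$ forces $R_{k_\ell}(\tau,x)\ge\rho_\ell-\eta'$ for all $\tau\in[\tau^\ast,t]$ once $|x|$ is large (because $\tau\ge\tau^\ast$ gives $|x|<\gamma''\tau$ for any $\gamma''\in(\gamma,c_j)$, and both $|x|$ and $\tau$ are then large), so this product is $\le\tfrac{S_j}{S_0}\prod_\ell e^{\frac{\alpha_\ell}{\mu_\ell}\eta'}\le\theta\tfrac{S_j}{S_0}$ for $\eta'$ small; using the identity on $[\tau^\ast,t]\subset[0,t]$ and combining with $-\mu_kR_k$ reproduces $f_k(R_k,\theta S_j)$. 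On $[0,\tau^\ast]$ the integrand is $\le\partial_\tau R_k$, so that piece contributes $\le\alpha_kS_0R_k(\tau^\ast,x)$, and Proposition~\ref{prop exp} at $\tau^\ast=|x|/\gamma$ gives $R_k(\tau^\ast,x)\le B^\e_{k,j}e^{-\frac{c_k(S_j)+\e}{2d_k}|x|(1-(c_k(S_j)+\e)/\gamma)}\le Ce^{-q|x|}$ for $\e$ small (again the slow-pulse form if $c_k(S_j)=0$). Finally, on the bounded strip $\{|x|\le L_0\}$ the crude bound already gives $f_k(R_k,S_0)\le f_k(R_k,\theta S_j)+\mu_k\max(S_0-\theta S_j,0)$, whose constant is absorbed into $Ce^{-q|x|}$ by enlarging $C$; this delivers~\eqref{R supersol} on all of $\{|x|<ct\}$.

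The main obstacle is the second point, and within it the calibration of the slope $\gamma$: it must exceed the intrinsic speed $c_k(S_j)$ of trait $k$ — so that Proposition~\ref{prop exp} makes $R_k(|x|/\gamma,x)$ exponentially small in $|x|$ — yet stay below the slowest speed $c_j$ of the traits already propagating — so that these have reached their plateaus $\rho_\ell$ along the relevant rays by time $|x|/\gamma$ — while also beating $c$. The nonemptiness of this window rests on the strict ordering $c_1>c_2>\dots>c_p$ of the propagation-sequence speeds (which itself follows from strict monotonicity of $\sigma\mapsto c_{k_{i+1}}(\sigma)$ where it is positive, together with $S_i<S_{i-1}$), and, for the top layer $j=p$, on~\eqref{hyp 2}. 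The small-$|x|$ correction and the successive choices of $\e,\eta',\eta$ are routine bookkeeping.
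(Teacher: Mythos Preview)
Your proof is correct and follows essentially the same route as the paper: for the first point you combine the second estimate of Lemma~\ref{lem ineg} (at $k=k_{j+1}$) with the exponential bounds of Proposition~\ref{prop exp} to make the perturbation sum smaller than $\eta$ on $\{|x|>ct\}$, and for the second you start from Lemma~\ref{lem rw}, split the time-integral at $|x|/\gamma$ for a slope $\gamma\in(\max\{c,c_k(S_j)\},c_j)$ (the paper calls it $\bar c$ and chooses it in $(\max\{c,c_{j+1}\},c_j)$, which is the same window uniformly in $k$), use $(H_j)$ to control the late piece and Proposition~\ref{prop exp} to make the early piece $R_k(|x|/\gamma,x)$ exponentially small. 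Your explicit treatment of the strip $\{|x|\le L_0\}$ and of the degenerate case $\theta S_j\ge S_0$ (including $j=0$) are welcome clarifications that the paper leaves implicit.
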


\begin{proof}

\medskip
\emph{First point. $R_{k_{j+1}}$ is supersolution of a reaction-diffusion equation.}\\

Let $j\in\llbracket 0,p-1\rrbracket$ be fixed and assume that \eqref{Hj} holds true. Owing to Lemma \ref{lem ineg}, we know that, for any $\omega\in (0,1)$, there is $L>0$ such that $R_{k_{j+1}}$ satisfies, for $t>0$, $\vert x\vert >L$,
$$
    \partial_t R_{k_{j+1}} \geq d_{k_{j+1}} \Delta R_{k_{j+1}} +f_{k_{j+1}}(R_{k_{j+1}},\omega S_j) - \mu_{k_{j+1}} S_j \left(\sum_{i\neq k_1,\ldots, k_j,k_{j+1}} \frac{\alpha_i}{\mu_i}R_i \right)R_{k_{j+1}}.
$$
For $i\neq k_1,\ldots,k_j$, for $\e>0$ small enough, the bounds provided by Proposition \ref{prop exp} tell us that $R_i(t,x) \leq B^\e_{i,j} e^{-\frac{(c_i(S_{j})+\e)}{2d_i}(\vert x\vert - (c_{i}(S_j)+\e)t)} $, for some $B^\e_{i,j} >0$. Hence, for $t>0$, $\vert x \vert >L $,
\begin{multline*}
    \partial_t R_{k_{j+1}} \geq d_{k_{j+1}} \Delta R_{k_{j+1}} +f_{k_{j+1}}(R_{k_{j+1}},\omega S_j) - \\
    \mu_{k_{j+1}} S_j \left(\sum_{i\neq k_1,\ldots, k_j,k_{j+1}} \frac{\alpha_i}{\mu_i}B^\e_{i,j} e^{-\frac{(c_i(S_{j})+\e)}{2d_i}(\vert x\vert - (c_{i}(S_j)+\e)t)} \right)R_{k_{j+1}}.
\end{multline*}

Let $c> \max_{i\neq k_1,\ldots,k_{j+1}} c_i( S_j) = \tilde c$. Up to taking $\e$ smaller if needed, we ensure that $\t c +\e< c$. Hence, for $\vert x\vert >c t$, for all $i\neq k_1,\ldots,k_{j+1}$, we have
$$
e^{-\frac{(c_i(S_{j})+\e)}{2d_i}(\vert x\vert - (c_{i}(S_j)+\e)t)}\leq e^{-\frac{(c_i( S_{j})+\e)}{ 2d_i}(c-\t c -\e) t}.
$$
Therefore, for any $\eta>0$, there is $T>0$ large enough so that
$$
 \partial_t R_{k_{j+1}} \geq d_{k_{j+1}} \Delta R_{k_{j+1}} +f_{k_{j+1}}(R_{k_{j+1}},\omega S_j) - \eta R_{k_{j+1}},\quad \text{for}\  t>T,\ c t<\vert x\vert .
$$

\medskip
\emph{Second point. $R_{k}$ is subsolution of a reaction-diffusion equation.}\\

Let $j\in\llbracket 0,p\rrbracket$ be fixed and assume that \eqref{Hj} holds true. Let $k\not\in K_j = \{k_1,\ldots,k_j\}$ be chosen. Owing to Lemma \ref{lem rw}, we know that $R_k$ satisfies
    $$
    \partial_t R_k \leq d_k \Delta R_k + \mu_kI_k^0 -\mu_kR_k + \alpha_kS_0 \int_0^t e^{-\frac{\alpha_k}{\mu_k}R_{k}}\partial_t R_k \left(\prod_{i\in K_j}e^{-\frac{\alpha_i}{\mu_i}R_i}\right)d\tau
    $$
    Let $c<c_j$ be chosen. We take $\ol c$ such that $\ol c \in (\max\{c, c_{j+1}\},c_j)$ (with the convention that, if $j=p$, then $c_{p+1}:=0$).

    The hypothesis \eqref{Hj} yields that, for any $\eta>0$, for all $i\in K_j =\{k_1,\ldots,k_j\}$, we have $R_i(t,x)\geq \r_i - \eta$ for $\vert x \vert < \ol c t$.

 Denote $\theta = \prod_{i\in K_j} e^{\frac{\alpha_i}{\mu_i}\eta}$. Assume that $\vert x \vert < ct$. Then, $\vert x\vert < \ol c t$ and we have
\begin{align*}
\int_0^t e^{-\frac{\alpha_k}{\mu_k}R_{k}}\partial_t R_{k} &\left(\prod_{i\in K_j}e^{-\frac{\alpha_i}{\mu_i}R_i}\right)d\tau \\
 &\leq \int_0^{\frac{\vert x \vert}{\ol c}} \partial_t R_{k} d\tau
+
\int_{\frac{\vert x \vert}{\ol c}}^t e^{-\frac{\alpha_k}{\mu_k}R_{k}}\partial_t R_{k} \left(\prod_{i\in K_j}e^{-\frac{\alpha_i}{\mu_i}R_i}\right)d\tau
\\
 &\leq \int_0^{\frac{\vert x \vert}{\ol c}} \partial_t R_{k} d\tau
+
\theta \left(\prod_{i\in K_j}e^{-\frac{\alpha_i}{\mu_i}\r_i}\right)  \int_{\frac{\vert x \vert}{\ol c}}^t e^{-\frac{\alpha_k}{\mu_k}R_{k}}\partial_t R_{k} d\tau
\\
 &\leq R_{k}\left(\frac{\vert x \vert}{\ol c},x\right) 
+
\theta \left(\prod_{i\in K_j}e^{-\frac{\alpha_i}{\mu_i}\r_i}\right)  \int_{0}^t e^{-\frac{\alpha_k}{\mu_k}R_{k}}\partial_t R_{k} d\tau.
\end{align*}

Therefore,
$$
\partial_t R_k \leq d_k \Delta R_k + \mu_kI_k^0 -\mu_kR_k + \mu_k \theta S_j (1 - e^{-\frac{\alpha_k}{\mu_k}R_{k}}) + \alpha_k S_0 R_{k}\left(\frac{\vert x \vert}{\ol c},x\right) .
$$
Now, using again Proposition \ref{prop exp}, for $\e>0$ small enough, we have
$
R_{k}(t,x) \leq  B^\e_{k,j} e^{-\frac{(c_{k}(S_{j})+\e)}{2d_{k}}(\vert x\vert - (c_{k}(S_j)+\e)t)}.
$
Up to taking $\e$ smaller if needed so that $\ol c > c_{k}(S_j) +\e$, this implies that
$$
R_{k}\left(\frac{\vert x \vert}{\ol c},x\right) \leq B^\e_{k,j} e^{-\frac{(c_{k}( S_{j})+\e)}{2d_{k}\ol c}(\ol c - (c_{k}(S_j)+\e))\vert x\vert}.
$$
In the end, taking $q:=\frac{(c_{k}( S_{j})+\e)}{2d_{k}\ol c}(\ol c - (c_{k}(S_j)+\e))$ and $C := \alpha_k S_0 B_{k,j}^\e$, we indeed find~\eqref{R supersol}.
\end{proof}
We are now in position to prove Proposition \ref{prop under}.
\begin{proof}[Proof of Proposition \ref{prop under}.]

We assume that $(H_j)$ holds true. Let us prove that $(H_{j+1})$ is verified, that is, we want to prove that, for all $c>c_{j+1}$,
\begin{equation}\label{prop sup r}
\sup_{ct<\vert x \vert}\vert R_{k_{j+1}}(t,x)\vert \underset{t\to+\infty}{\longrightarrow} 0,
\end{equation}
and that, for all $c\in (0,c_{j+1})$,
\begin{equation}\label{prop r}
\sup_{\delta < \vert x \vert < ct}\vert R_{k_{j+1}}(t,x)-\r_{j+1}\vert \underset{\delta,t\to+\infty}{\longrightarrow} 0.
\end{equation}

  \medskip
\emph{Step 1.Proof of \eqref{prop sup r}.}\\ 
Owing to Proposition \ref{prop exp}, for $\e>0$ small enough, there is $B>0$ such that
$$
R_{k_{j+1}}(t,x) \leq B e^{-\frac{(c_{k_{j+1}}(S_j)+\e)}{2d_{k_{j+1}}}\big(\vert x \vert - (c_{k_{j+1}}(S_j)+\e)t\big)},\quad \text{for all } \ t>0,\ x \in \R.
$$
Then, for any $c>c_{j+1} +\e= c_{k_{j+1}}(S_j)+\e$, we have
$$
\sup_{\vert x \vert >c t} R_{k_{j+1}}(t,x) \leq B e^{-\frac{(c_{j+1}+\e)}{2d_k}(c-c_{j+1}-\e)t} \underset{t\to +\infty}{\longrightarrow} 0.
$$
Then, because we can take $\e$ as close to zero as we want, \eqref{prop sup r} holds true.

\medskip
\emph{Step 2.Upper bound.}\\  
The remaining steps are dedicated to prove \eqref{prop r}. This particular step is dedicated to prove that, for any $\eta>0$, there are $M,\e>0$ such that
\begin{equation}\label{borne sup r}
R_{k_{j+1}}(t,x)\leq \r_{j+1} +\eta + Me^{-\e\vert x \vert}.
\end{equation}
This will imply in particular that, for any $c>0$,
$$
\limsup_{\substack{t\to +\infty \\ \delta\to+\infty}}\sup_{\delta < \vert x \vert < ct} \left(R_{k_{j+1}}(t,x)-\r_{j+1}\right)\leq 0.
$$
The remaining step below will prove that this is also true with interverting $\rho_{j+1}$ and $R_{k_{j+1}}$ (but then we will have to restrict to $c<c_{j+1}$).

First, let us take $c\in (c_{j+1},c_j)$. Observe that, owing to \eqref{R supersol} from Lemma \ref{lem sub}, we have, for any $\theta>1$, for $\vert x \vert < ct$,
$$
    \partial_t R_{k_{j+1}} \leq d_{k_{j+1}} \Delta R_{k_{j+1}}+ \mu_{k_{j+1}}I_{k_{j+1}}^0 +C e^{-q\vert x \vert} + f_{k_{j+1}}(R_{k_{j+1}},\theta S_j),
$$
for some $C,q>0$.

We denote $\r(\theta)$ the unique positive solution of the equation $f_{k_{j+1}}(\rho(\theta),\theta S_j) = 0$ (this solution exists because $z\mapsto f_{k_{j+1}}(z,\theta S_j)$ is concave and vanishes at $z=0$ where it has a strictly positive derivative, and goes to $-\infty$ as $z\to +\infty$). We have $\r(\theta)\to \r_{j+1}$ as $\theta \to 1$.

Let us define the function
$$
u(t,x) = \r(\theta) + Me^{-\e x}
$$
Because the function $f_{k_{j+1}}(\cdot,\theta S_j)$ is concave, we have $f_{k_{j+1}}(u,\theta S_j) \leq f_{k_{j+1}}^\prime(\r(\theta),\theta S_j)M e^{-\e x }$. Therefore,
    $$
    \partial_t u -d_{k_{j+1}} \Delta u -f_{k_{j+1}}(u,\theta S_j)\geq Me^{-\e x }(-d_{k_{j+1}}\e^2-  f_{k_{j+1}}^\prime(\r(\theta),\theta S_j) ).
    $$
    Up to taking $M>0$ large enough and $\e>0$ small enough, we can guarantee that
    $$
     M (-d_{k_{j+1}}\e^2-  f_{k_{j+1}}^\prime(\r(\theta),\theta S_j) )\geq (\mu_{k_{j+1}}I_{k_{j+1}}^0 + C e^{-q\vert x \vert})e^{\e x},
    $$
    indeed, $ f_{k_{j+1}}^\prime(\r(\theta),\theta S_j)<0$ and $I_{k_{j+1}}^0$ is compactly supported.
    
Therefore, $u$ and $ R_{k_{j+1}}$ are respectively supersolution and subsolution of the same parabolic equation on the domain $\vert x \vert < ct$ ($u$ is actually supersolution everywhere in space and time). We have $u(0,\cdot)\geq  R_{k_{j+1}}(0,\cdot)=0$. To apply the parabolic comparison principle, we need to have $ R_{k_{j+1}}(t,\pm ct)\leq u(t,ct)$, but this comes again from the exponential estimates of Proposition \ref{prop exp}, up to taking $M$ large enough (and because we took $c>c_{j+1}$). 

Doing the same reasoning with the supersolution $\rho(\theta)+Me^{\e x}$, we finally end up with finding, thanks to the parabolic comparison principle, that
$$
R_{k_{j+1}}(t,x)\leq \rho(\theta) + Me^{-\e \vert x \vert},\quad t>0,\ \vert x \vert <ct.
$$
Taking $\theta$ close enough to $1$ yields that \eqref{borne sup r} is verified at least for $\vert x\vert <ct$, but because $R_{k_{j+1}}$ is non-decreasing with respect to $t$, this is actually true for all $t>0, x\in \R$.

  \medskip
\emph{Step 3. Lower bound.}\\
It now remains to prove that, for $c\in (0,c_{j+1})$, we have
$$
\limsup_{\substack{\delta \to+\infty\\t\to +\infty}}\sup_{\delta < \vert x \vert < ct} \left(\r_{j+1}-R_{k_{j+1}}(t,x)\right)\leq 0,
$$
or, equivalently, that
\begin{equation}\label{equi prop}
\liminf_{\substack{\delta \to+\infty\\t\to +\infty}}\inf_{\delta < \vert x \vert < ct} R_{k_{j+1}}(t,x) \geq \r_{j+1}.
\end{equation}
To do so, we use \eqref{eq mf} from Lemma \ref{lem sub}. Let us take $\ul c, \ol c \in (\max\{\t c , c\} , c_{j+1})$ (where $\t c$ is from Lemma \ref{lem sub}), such that $\ul c <\ol c$.

Let us take $\eta>0$ small enough and $\omega \in (0,1)$ close enough to $1$ such that
$$
2\sqrt{d_{k_{j+1}}(\omega \alpha_{k_{j+1}}S_j - \mu_{k_{j+1}}-\eta)} > \ol c,
$$
which is possible because $2\sqrt{d_{k_{j+1}}( \alpha_{k_{j+1}}S_j - \mu_{k_{j+1}})} = c_{j+1}> \ol c$.
The equation \eqref{eq mf} from Lemma \ref{lem sub} then implies that there is $T>0$ such that
$$
 \partial_t R_{k_{j+1}} \geq d_{k_{j+1}} \Delta R_{k_{j+1}} +f_{k_{j+1}}(R_{k_{j+1}},\omega S_j) - \eta R_{k_{j+1}},\quad \text{for}\ t>T,\ \vert x \vert > \ul c t.
$$
Let us now define $u(t,x)$ as the solution of the equation
$$
 \partial_t u = d_{k_{j+1}} \Delta u +f_{k_{j+1}}(u,\omega S_j) - \eta u,\quad \text{for}\ t>T,\ \vert x \vert > \ul c t,
$$
with (moving) Dirichlet boundary condition $u(t,\pm \ul c t)=0$ for $t>T$, and with initial condition at time $t=T$ given by $u(T,\cdot)= R_{k_{j+1}}(T,\cdot)$.

Then, applying the parabolic comparison principle on the domain $E:= \{(t,x) \ : \ t>T,\ \vert x \vert >\ul c t\}$, we find that $R_{k_{j+1}}(t,x)\geq u(t,x)$ for $(t,x) \in E$.

Now, because $R_{k_{j+1}}(t,x)$ is non-decreasing with respect to $t$, and by continuity, we have that there is $x^\delta_t$ such that $\delta<\vert x^\delta_t\vert < ct$ that satisfies
$$
\inf_{\delta < \vert x \vert <ct}R_{k_{j+1}}(t,x) = R_{k_{j+1}}(t,x^\delta_t) \geq R_{k_{j+1}}\left(\frac{\vert x^\delta_t\vert}{\ol c},x^\delta_t\right) .
$$
Observe that we used the fact that $\ol c >c$ here.

Now, for $\delta,t$ large enough, we have $\left(\frac{\vert x^\delta_t\vert}{\ol c},x^\delta_t\right)\in E$. Hence, for $\delta,t$ large, we have
$$
\inf_{\delta < \vert x \vert <ct}R_{k_{j+1}}(t,x)\geq u\left(\frac{\vert x^\delta_t\vert}{\ol c},x^\delta_t\right).
$$
Now, owing to classical results from reaction-diffusion equations theory, we have
$$
u(t,\pm \ol c t) \to \r_{j+1}(\omega,\eta),
$$
where $\r_{j+1}(\omega,\eta)$ is the unique positive solution of
$$
f_{k_{j+1}}(\r_{j+1}(\omega,\eta),\omega S_{j})-\eta \r_{j+1}(\omega,\eta)=0.
$$
This solution exists and is unique provided $\eta,\omega$ are small enough and close enough to $1$ respectively. Clearly, we have $\r_{j+1}(\omega,\eta)\to \r_{j+1}$ as $\eta$ goes to zero and $\omega$ goes to $1$, which yields \eqref{equi prop}, and concludes the proof.
\end{proof}

To conclude the proof of Theorem \ref{main th}, we need the following:
\begin{prop}\label{prop no prop}
Let $(S,I_1,\ldots, I_N,R_1,\ldots,R_N)$ be the solution of \eqref{syst} arising from the initial datum $(S_0,I_1^0,\ldots, I_N^0,0,\ldots,0)$.

Assume that $(H_p)$ holds true. Then, for every $k\neq k_1,\ldots,k_p$, the trait $k$ vanishes.
\end{prop}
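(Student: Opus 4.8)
The plan is to prove extinction of every trait $k\notin\{k_1,\dots,k_p\}$ by showing that its time-limit $R_k^\infty(x):=\lim_{t\to+\infty}R_k(t,x)$ — which exists by monotonicity (see the remark after Definition \ref{def}) and is bounded — tends to $0$ as $|x|\to+\infty$; this immediately gives $\rho_k=\liminf_{|x|\to+\infty}R_k^\infty(x)=0$. The decisive input is hypothesis \eqref{hyp 2}: it forces $\alpha_k S_p-\mu_k<0$ for all such $k$, and in particular $c_k(S_p)=0$. Note that this makes the exponential estimates of Proposition \ref{prop exp} (with $j=p$) insufficient by themselves, since for $c_k(S_p)=0$ they only control $R_k$ for $|x|$ slightly larger than $\varepsilon t$ and blow up along fixed vertical lines; one really has to exploit the nonlinear structure of the equation for $R_k$.

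First I would produce, on a moving region, a linear differential inequality for $R_k$ with exponentially decaying forcing and strictly negative zero-order term. Applying the second point of Lemma \ref{lem sub} with $j=p$ (for $p\ge 1$; when $p=0$ the same inequality is read off directly from Lemma \ref{lem rw} by bounding the integral term by $\frac{\mu_k}{\alpha_k}\bigl(1-e^{-\frac{\alpha_k}{\mu_k}R_k}\bigr)$), one gets, for any $\theta>1$, constants $C,q>0$ and any $c\in(0,c_p)$ such that
\[
\partial_t R_k \le d_k \Delta R_k + \mu_k I_k^0 + C e^{-q|x|} + f_k(R_k,\theta S_p),\qquad |x|<ct.
\]
Since $z\mapsto f_k(z,\theta S_p)$ is concave and vanishes at $z=0$, one has $f_k(R_k,\theta S_p)\le(\alpha_k\theta S_p-\mu_k)R_k$; as $\alpha_k S_p-\mu_k<0$ by \eqref{hyp 2}, fixing $\theta>1$ close enough to $1$ makes $\nu:=\mu_k-\alpha_k\theta S_p>0$, so that with $g(x):=\mu_k I_k^0(x)+C e^{-q|x|}\ge 0$ (an integrable, exponentially decaying function),
\[
\partial_t R_k \le d_k \Delta R_k + g(x) - \nu R_k,\qquad |x|<ct.
\]

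Next I would let $t\to+\infty$. As $R_k$ is non-decreasing in $t$ and bounded, the translates $R_k(\cdot+n,\cdot)$ converge locally uniformly to $R_k^\infty$, and interior parabolic estimates upgrade this to convergence in $C^{1,2}$ on compact sets with $\partial_t R_k(\cdot+n,\cdot)\to 0$. Since any fixed $x$ eventually lies in the cone $|x|<ct$, the inequality above passes to the limit, so $R_k^\infty$ is a bounded, non-negative, classical subsolution on all of $\R$ of the elliptic equation $d_k v''+g-\nu v=0$, i.e. $d_k(R_k^\infty)''-\nu R_k^\infty\ge -g$. Let $V$ be the unique bounded solution of $d_k V''-\nu V=-g$, namely the convolution of $g$ with the Green's function of $-d_k\partial_{xx}+\nu$; then $V\ge 0$ (as $g\ge 0$) and $V(x)\to 0$ as $|x|\to+\infty$ (as $g\in L^1$). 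The difference $w:=R_k^\infty-V$ is bounded and satisfies $d_k w''\ge\nu w$ with $\nu>0$, which forces $w\le 0$: at a positive interior maximum one would have $w''\le 0<\nu w$, a contradiction, and if the supremum is not attained one translates and uses interior estimates to reduce to that case. Hence $R_k^\infty\le V$, so $R_k^\infty(x)\to 0$ as $|x|\to+\infty$, i.e. $\rho_k=0$ and the trait $k$ vanishes.

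The only genuinely delicate step is the passage to the limit just described: turning the parabolic subsolution inequality — valid only inside the opening cone $|x|<ct$ — into a bona fide elliptic subsolution inequality on the whole line. Everything after that is a routine Green's function and maximum principle computation, and the case $p=0$ only costs a line of bookkeeping because Lemma \ref{lem sub} is not literally phrased for $j=0$ while the inequality it would give follows at once from Lemma \ref{lem rw}.
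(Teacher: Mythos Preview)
Your proof is correct and takes a genuinely different route from the paper in the second half. Both arguments start identically: apply the second point of Lemma~\ref{lem sub} with $j=p$ together with the concavity of $z\mapsto f_k(z,\theta S_p)$ and hypothesis~\eqref{hyp 2} to obtain a linear parabolic subsolution inequality $\partial_t R_k\le d_k\Delta R_k+g(x)-\nu R_k$ on the cone $|x|<ct$, with $\nu>0$. From there the paper stays parabolic: it builds a stationary supersolution $w(x)=Me^{-\lambda x}$, invokes Proposition~\ref{prop exp} \emph{directly} to verify $R_k\le w$ on the moving boundary $|x|=ct$, and applies the parabolic comparison principle on the cone, which yields the uniform-in-$t$ bound $R_k(t,x)\le Me^{-\lambda|x|}$. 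You instead let $t\to\infty$ via time-translates, monotonicity and interior parabolic regularity, obtain the elliptic inequality $d_k(R_k^\infty)''-\nu R_k^\infty\ge -g$ on all of~$\R$, and compare with the bounded Green's-function solution $V$. Your route avoids checking boundary data on the moving front and in that sense is cleaner; the price is the limit-passage step and a Liouville-type argument for $d_k w''\ge\nu w$ (your ``translate and use interior estimates'' handling of the non-attained supremum is a bit informal---the standard one-line shortcut is to compare $w$ with $\varepsilon\cosh(\sqrt{\nu/d_k}\,x)$ and let $\varepsilon\to0$). The paper's route is more hands-on and delivers the explicit exponential bound on $R_k$ in one stroke. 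Both reach the same conclusion.
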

\begin{proof}
Let $k\neq k_1,\ldots,k_p$ be chosen. Using \eqref{R supersol} from Lemma \ref{lem sub}, we know that, for $\theta>1$, for $c<c_p$, there are $C,q,T>0$ such that
$$
    \partial_t R_k \leq d_k \Delta R_k + \mu_kI_k^0 + C e^{-q\vert x \vert} + f_{k}(R_k,\theta S_p),\quad \text{for}\ t>T,\ \vert x \vert < c t.
$$
By definition of the propagation sequence, up to taking $\theta$ close enough to $1$ and thanks to hypothesis \eqref{hyp 2}, we can guarantee that $-\eta := \alpha_k \theta S_p - \mu_k<0$. Hence, by concavity of $z\mapsto f_k(z,\theta S_p)$, and because $f^\prime_k(0,\theta S_p) = -\eta$, we have
$$
    \partial_t R_k \leq d_k \Delta R_k + \mu_kI_k^0 + C e^{-q\vert x \vert} -\eta R_k,\quad \text{for}\ t>T,\ \vert x \vert < c t.
$$
Now, owing to the estimates from Proposition \ref{prop exp}, we know that, for every $\e>0$, we have (remember that $c_k(S_p)=0$)
$$
R_k(t,x)\leq B^\e_{k,p} e^{-\frac{\e}{2d_k}\big(\vert x \vert - \e t\big)},\quad \text{for all } \ t>0,\ x \in \R.
$$
Let now $M,\lambda>0$ be fixed, and define $w(t,x) := Me^{-\lambda x}$. We have
$$
\partial_t w -d_k\Delta w +\eta w = (-d_k \lambda^2 + \eta)Me^{-\lambda x}\geq \mu_k I_k^0 + C e^{-q\vert x\vert},
$$
provided $\lambda$ is small enough and $M$ is large enough. Moreover, up to decreasing $\lambda$ if needed, we have
$$
M e^{-\lambda c t}\geq B^\e_{k,p} e^{-\frac{\e}{2d_k}\big(c - \e \big)t}.
$$
Therefore, $R_k(t,\pm c t)\leq w(t,\pm c t)$. Hence, we can apply the parabolic comparison principle to get that
$$
R_k(t,x)\leq Me^{-\lambda x}, \quad  \text{for }\ t>T,\ \vert x\vert <ct.
$$
Because $R_k(t,x)$ is non-decreasing with respect to $t$, then this is actually true for all $t>0,x \in \R$. Therefore, the $k$-th trait does not propagate, hence the result.
\end{proof}

\section{Qualitative properties}\label{sec quali}

Now that we have proved Theorem \ref{main th}, we use it to study some qualitative properties of our model \eqref{syst}. More precisely, Theorem \ref{main th} tells us that the dynamic of the system is given by the propagation sequences from  Definition \ref{def prop seq}. Hence, understanding how the propagation sequences depend on the parameters of the model will yield qualitative properties.

As we already mentioned, this task may be rather intricate, because the propagation sequences depend on the parameters in a complex way.
Hence, we do not plan on giving a general picture here but rather we want to emphasize the three properties mentioned at the end of Section \ref{sec res}. These three properties are stated as the three following results, that are corollaries of Theorem \ref{main th}.\\

First, we show that the basic reproduction numbers, introduced in Section \ref{sec rap} as the ratio $\frac{\alpha S_0}{\mu}$, are not good indicators of the dynamic of the system.

\begin{corollary}\label{cor 1}
Consider the system \eqref{syst} with $N=2$. Take $d_1=d_2 =1$. Then, one can choose $\alpha_1,\alpha_2,\mu_1,\mu_2$ and $S_0$ such that
$$
1<\frac{\alpha_1 S_0}{\mu_1} < \frac{\alpha_2 S_0}{\mu_2},
$$
but where the trait $1$ propagates and the trait $2$ vanishes.
\end{corollary}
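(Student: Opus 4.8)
The plan is to produce the example by exhibiting explicit parameters and checking the (very short) propagation sequence of Definition~\ref{def prop seq} together with the hypotheses of Theorem~\ref{main th}; the only trait that will propagate is trait $1$, so the propagation sequence will have length $p=1$.

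The guiding observation is that, since $d_1=d_2=1$, the initial speeds are $c_k(S_0)=2\sqrt{\alpha_kS_0-\mu_k}$, so the competition at time $0$ is decided by the \emph{differences} $\alpha_kS_0-\mu_k$, whereas the basic reproduction numbers $\alpha_kS_0/\mu_k$ are decided by the \emph{ratios}; the two orderings are unrelated, and this is the whole point. I would look for parameters satisfying three requirements: (i) $1<\alpha_1S_0/\mu_1<\alpha_2S_0/\mu_2$ (the prescribed ordering of reproduction numbers); (ii) $\alpha_1S_0-\mu_1>\alpha_2S_0-\mu_2>0$, so that $c_1(S_0)>c_2(S_0)>0$ and the argmax in the first step of Definition~\ref{def prop seq} is the single index $k_1=1$; and (iii) $\alpha_2S_1-\mu_2<0$, where $S_1:=S_0e^{-\frac{\alpha_1}{\mu_1}\rho_1(S_0)}$ is the susceptible density left behind once trait $1$ has run its course. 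Under (ii)--(iii) the propagation sequence is $k_1=1$, $c_1=c_1(S_0)$, $\rho_1=\rho_1(S_0)$, $S_1$ as above, and then it stops (the only remaining index, $2$, has $c_2(S_1)=0$, so the defining set is empty), hence $p=1$ and no well-definedness issue for the argmax arises. Consequently hypothesis~\eqref{hyp} is vacuous and hypothesis~\eqref{hyp 2} is precisely (iii); Theorem~\ref{main th} then gives that trait $1$ propagates with speed $c_1(S_0)$ and asymptotic value $\rho_1(S_0)$ and that trait $2$ vanishes, which is the assertion of the corollary.

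It then remains to realize (i)--(iii) simultaneously; one may simply take $S_0=1$. Requirements (i) and (ii) are obviously compatible: fix $\alpha_1/\mu_1$ some value $>1$ and take $\mu_1$ large, so that $\alpha_1-\mu_1=\mu_1(\alpha_1/\mu_1-1)$ is as large as desired while the ratio stays put; note that $\rho_1(1)$ and $S_1$ depend only on $\alpha_1/\mu_1$, not on $\mu_1$ separately. The one step requiring a short computation is (iii), i.e. that $S_1=e^{-\frac{\alpha_1}{\mu_1}\rho_1}$ can be made small: $\rho_1$ is the positive root of $1-e^{-\frac{\alpha_1}{\mu_1}\rho}=\rho$, and since the left-hand side at $\rho=\frac12$ exceeds $\frac12$ as soon as $\alpha_1/\mu_1>2\ln 2$, one gets $\rho_1>\frac12$, hence $S_1<e^{-\frac12\,\alpha_1/\mu_1}\to 0$ as $\alpha_1/\mu_1\to\infty$. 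Thus, fixing $\alpha_1/\mu_1$ large, any $\alpha_2,\mu_2$ with $\alpha_2/\mu_2>\alpha_1/\mu_1$, $0<\alpha_2-\mu_2<\alpha_1-\mu_1$, and $\alpha_2S_1<\mu_2$ works, and these form an open, non-empty set of choices once $S_1$ is small enough. A concrete instance is $\alpha_1=30$, $\mu_1=10$, $\alpha_2=5$, $\mu_2=1$, $S_0=1$, for which $\alpha_1S_0/\mu_1=3<5=\alpha_2S_0/\mu_2$, the differences are $20>4>0$, $\rho_1\approx0.94$, $S_1=e^{-3\rho_1}\approx0.06$, and $\alpha_2S_1-\mu_2\approx-0.7<0$.

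The main (indeed the only) obstacle is step (iii): one must quantify how much of the susceptible pool trait $1$ consumes, i.e. obtain a usable lower bound on $\rho_1(S_0)$. Once that is in hand, everything else is a routine verification against Definition~\ref{def prop seq} and a direct application of Theorem~\ref{main th}.
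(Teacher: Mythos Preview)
Your proof is correct and follows the same skeleton as the paper's: identify the three constraints (i)--(iii), exhibit parameters meeting them, and invoke Theorem~\ref{main th} with $p=1$. The only difference lies in how (iii) is realised. You force $S_1$ to be small by taking $\alpha_1/\mu_1$ large, which requires the lower bound on $\rho_1$ you compute. The paper instead fixes $\alpha_1,\mu_1,S_0$ \emph{arbitrarily} (with $\alpha_1 S_0/\mu_1>1$), uses the automatic fact $\alpha_1 S_1/\mu_1<1$ (true after any single-trait invasion), picks $\lambda\in(\alpha_1/\mu_1,1/S_1)$, and sets $\alpha_2=\e\lambda$, $\mu_2=\e$ with $\e$ small; then $\alpha_2 S_1-\mu_2=\e(\lambda S_1-1)<0$ for free and $\alpha_2 S_0-\mu_2=\e(\lambda S_0-1)$ is made small by shrinking $\e$. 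This scaling trick removes what you call the only obstacle and avoids any quantitative estimate on $\rho_1$; your version trades that slickness for an explicit numerical example, which is also useful.
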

This corollary gives the existence of situations where the trait with the lower basic reproduction number propagates, and where the trait with the larger basic reproduction number disappears. We mention it can be adapted to the case where $N\geq 2$.

This is in contradiction with the intuition one could have built from the model with one single trait.\\

    The second property we prove concerns the contagiousness of each successive trait. More precisely, we show that each trait that propagates is less contagious, and has a lower recovery rate, than the traits that have propagated before. This is not completely direct from Theorem \ref{main th}, which only tells us that each trait that propagates has a smaller speed than the trait that propagates before.
\begin{corollary}\label{cor 2}
Consider the system \eqref{syst} with all diffusion equals, i.e., $d_1=\ldots=d_N$. Let $(k_i)_{i\in\llbracket 1,p\rrbracket}$ be the sequence of the indices that propagate, given in Definition \ref{def prop seq}.

Assume that the coefficients satisfy the hypotheses of Theorem \ref{main th} (in particular \eqref{hyp} and \eqref{hyp 2}).

Then, for all $i\in\llbracket1 , p-1 \rrbracket$, one has
$$
\alpha_{k_{i+1}}< \alpha_{k_i} \quad \text{ and }\quad \mu_{k_{i+1}}<\mu_{k_i}.
$$
\end{corollary}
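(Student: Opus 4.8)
The plan is to translate the statement into elementary inequalities about the affine quantities $\alpha_k\sigma-\mu_k$, evaluated at the two susceptible levels $S_{i-1}$ and $S_i$. This is possible precisely because all diffusions are equal: writing $d$ for the common value, $c_k(\sigma)=2\sqrt{d(\alpha_k\sigma-\mu_k)}$ wherever the argument is nonnegative, so comparing speeds at a fixed $\sigma$ amounts to comparing the numbers $\alpha_k\sigma-\mu_k$. Everything will then follow from a handful of facts relating the indices $k_i$, $k_{i+1}$ and the levels $S_{i-1}>S_i$ (the strict inequality being immediate from $S_i=S_{i-1}e^{-\alpha_{k_i}\rho_i/\mu_{k_i}}$ with $\rho_i>0$).

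The one substantial ingredient, and the step I expect to be the crux, is an ``effective reproduction number drops below $1$'' property for each propagating trait, analogous to what happens in the one-trait model recalled in Section~\ref{sec rap}: for every $i\in\llbracket1,p\rrbracket$ one has $\alpha_{k_i}S_i<\mu_{k_i}$. I would obtain it from the shape of $f_{k_i}(\cdot,S_{i-1})$: this function is strictly concave (second derivative $-\alpha_{k_i}^2S_{i-1}\mu_{k_i}^{-1}e^{-\alpha_{k_i}\rho/\mu_{k_i}}<0$), it vanishes at $\rho=0$ with positive slope $\alpha_{k_i}S_{i-1}-\mu_{k_i}>0$ there (positive because $c_{k_i}(S_{i-1})>0$ by definition of the propagation sequence), and it vanishes again at $\rho=\rho_i>0$; hence its slope at $\rho_i$ is strictly negative, and that slope equals $\alpha_{k_i}S_{i-1}e^{-\alpha_{k_i}\rho_i/\mu_{k_i}}-\mu_{k_i}=\alpha_{k_i}S_i-\mu_{k_i}$, which gives the claim.

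With this in hand, for $i\in\llbracket1,p-1\rrbracket$ I would record four inequalities: (i) $\alpha_{k_i}S_{i-1}-\mu_{k_i}\ge\alpha_{k_{i+1}}S_{i-1}-\mu_{k_{i+1}}$, because $k_{i+1}$ is one of the indices over which the argmax defining $k_i$ is taken --- if the right-hand side is $\le0$ this is trivial since the left side is positive, otherwise square $c_{k_i}(S_{i-1})\ge c_{k_{i+1}}(S_{i-1})$; (ii) $\alpha_{k_{i+1}}S_i-\mu_{k_{i+1}}>0$, because $k_{i+1}$ lies in the propagation sequence, so $c_{k_{i+1}}(S_i)>0$; (iii) $\alpha_{k_i}S_i-\mu_{k_i}<0$ from the previous paragraph; and (iv) $S_i<S_{i-1}$. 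Subtracting (iii) from (ii) gives $(\alpha_{k_i}-\alpha_{k_{i+1}})S_i<\mu_{k_i}-\mu_{k_{i+1}}$, while (i) reads $(\alpha_{k_i}-\alpha_{k_{i+1}})S_{i-1}\ge\mu_{k_i}-\mu_{k_{i+1}}$; chaining these and invoking (iv) forces $\alpha_{k_i}-\alpha_{k_{i+1}}>0$, i.e.\ $\alpha_{k_{i+1}}<\alpha_{k_i}$, and then $\mu_{k_i}-\mu_{k_{i+1}}\ge(\alpha_{k_i}-\alpha_{k_{i+1}})S_{i-1}>0$ gives $\mu_{k_{i+1}}<\mu_{k_i}$. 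The only delicate points are the degenerate sub-cases in (i) (the speed function failing to be differentiable at $0$, or $c_{k_{i+1}}(S_{i-1})$ vanishing), but these only make the inequalities easier, so I do not expect any genuine obstacle beyond the concavity argument of the second paragraph.
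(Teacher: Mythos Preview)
Your approach is essentially the same as the paper's: both proofs reduce everything to the four facts (i)--(iv) you list, derive $\alpha_{k_{i+1}}<\alpha_{k_i}$ by chaining the inequalities at the two levels $S_{i-1}>S_i$, and then deduce the $\mu$-ordering. Your concavity argument for $\alpha_{k_i}S_i<\mu_{k_i}$ is a nice explicit justification of what the paper simply calls ``by definition of $S_i$''.

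There is, however, a slip in your very last line. From (i) you have $(\alpha_{k_i}-\alpha_{k_{i+1}})S_{i-1}\ge\mu_{k_i}-\mu_{k_{i+1}}$, which is an \emph{upper} bound on $\mu_{k_i}-\mu_{k_{i+1}}$, so you cannot write ``$\mu_{k_i}-\mu_{k_{i+1}}\ge(\alpha_{k_i}-\alpha_{k_{i+1}})S_{i-1}>0$''. The fix is immediate and already sitting in your text: use the inequality you obtained from (ii) and (iii) at the level $S_i$, namely $\mu_{k_i}-\mu_{k_{i+1}}>(\alpha_{k_i}-\alpha_{k_{i+1}})S_i>0$ (the last strict inequality now following from $\alpha_{k_i}>\alpha_{k_{i+1}}$ and $S_i>0$). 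Equivalently, once $\alpha_{k_{i+1}}<\alpha_{k_i}$ is known, chain $\mu_{k_{i+1}}<\alpha_{k_{i+1}}S_i<\alpha_{k_i}S_i<\mu_{k_i}$ directly from (ii) and (iii). This is exactly what the paper does.
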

To conclude, we present a result concerning the total number of casualties, or, what is equivalent, the final number of susceptible individuals.

We denote 
$$
S_{\infty}:=\lim_{\vert x \vert \to +\infty}\lim_{t\to+\infty}S(t,x).
$$
As we already mention in Section \ref{sec res}, when we can apply Theorem \ref{main th}, we have $S_\infty = S_p$, where $S_p$ is given by the propagation sequences.

When $N=1$, and when $\frac{\alpha S_0}{\mu}\leq 1$, the single trait does not propagate, and $S_\infty = S_0$, whereas when $\frac{\alpha S_0}{\mu}>1$, the trait propagates, and $S_\infty = S_0 e^{-\frac{\alpha}{\mu}\rho}$, were $\rho$ solves $S_0(1-e^{-\frac{\alpha}{\mu}\rho})=\rho$.

We can rewrite this by saying that, in this case $N=1$, $S_\infty$ is the smallest solution of
$$
S_0 - \frac{\mu}{\alpha}\ln(S_0) = S_\infty - \frac{\mu}{\alpha}\ln(S_\infty).
$$
This comes easily from the result we just mentioned. The function $f(z) = z - \frac{\mu}{\alpha}\ln(z)$ is strictly convex, goes to $+\infty$ as $z$ goes to $0$ or to $+\infty$, and has a unique minimum, reached at $z = \frac{\mu}{\alpha}$.

Observe that we necessarily have $S_0e^{-\frac{\alpha}{\mu}\rho}<\frac{\mu}{\alpha}$ when $S_0>\frac{\mu}{\alpha}$ (this is natural, when a disease propagates, the number of susceptible left untouched is too small for disease to propagate again). We shall use this fact several time in the following proofs.

We can plot $S_\infty$ as a function of $S_0$. One would obtain the following graph:
\begin{center}
    \includegraphics[scale=0.7]{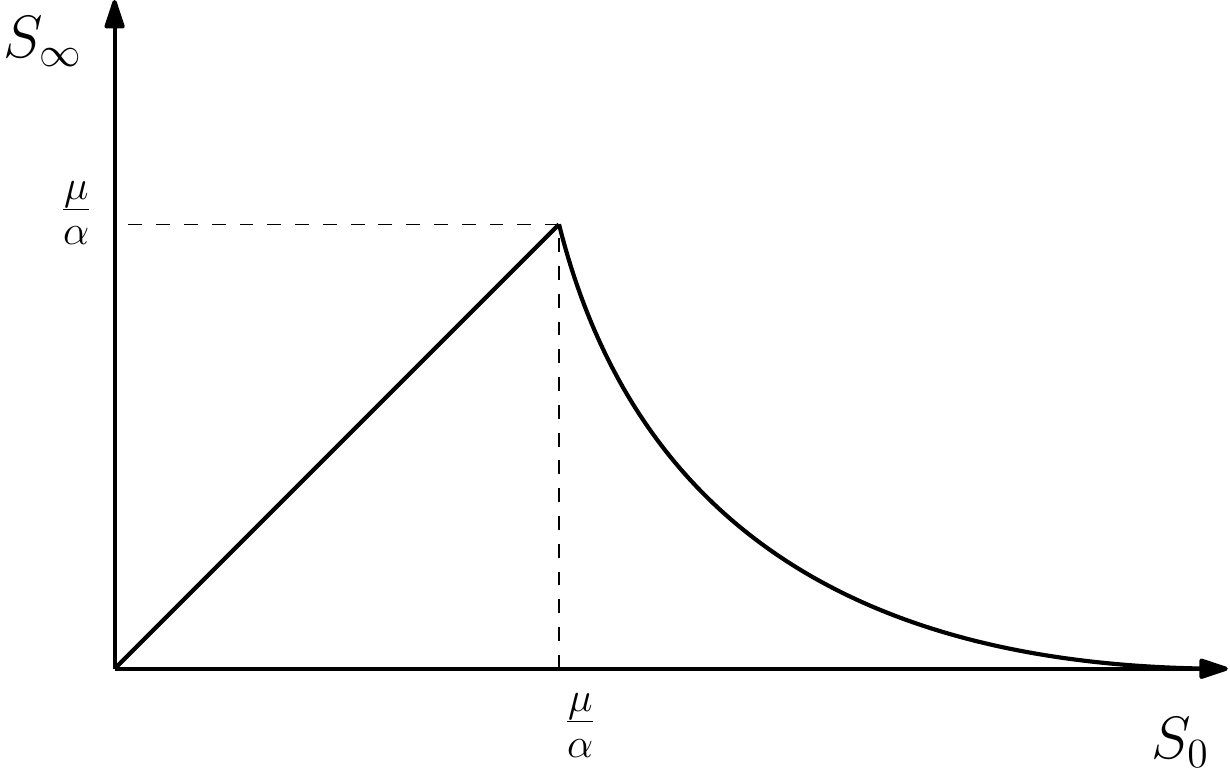}
\end{center}
Our next corollary shows how the final number of susceptible $S_\infty$ depends on the initial number of susceptible $S_0$ when $N=2$.

\begin{corollary}\label{cor 3}
Consider the system \eqref{syst} with $N=2$. We denote $S_\infty := \lim_{\vert x \vert \to +\infty}\lim_{t\to +\infty}S(t,x)$.

Without loss of generality, assume that
$$
\frac{\mu_1}{\alpha_1} < \frac{\mu_2}{\alpha_2}.
$$

Then, there are $0<\ul r \leq \ol r$ such that
\begin{itemize}
    \item If 
    $$
    \frac{d_2}{d_1}<\ul r,
    $$
    the situation is the same as if there were only the trait $1$ in the model:
    \begin{itemize}
        \item When $S_0 < \frac{\mu_1}{\alpha_1}$, we have $S_\infty = S_0$.
        \item  When $S_0>\frac{\mu_1}{\alpha_1}$, only the trait $1$ propagates, and $S_\infty$ is the smallest solution of
    $$
    S_\infty -\frac{\mu_1}{\alpha_1}S_\infty = S_0 - \frac{\mu_1}{\alpha_1}\ln(S_0).
    $$
    \end{itemize}

    \item If 
    $$
    \frac{d_2}{d_1}> \ol r
    $$
    there are $\ul S, \ol S$ that verify $\frac{\mu_1}{\alpha_1}<\frac{\mu_2}{\alpha_2}<\ul S< \ol S$
    and $\e\geq 0$ such that
    \begin{itemize}
        \item When $S_0<\frac{\mu_1}{\alpha_1}$, no trait propagate, and $S_\infty = S_0$.
        \item When $S_0\in (\frac{\mu_1}{\alpha_1}, \ul S)$, only the trait $1$ propagates, and then $S_\infty = S_0e^{-\frac{\alpha_1}{\mu_1}\rho_1(S_0)}$.
        \item When $S_0 \in (\ul S, \ul S +\e)$, we can not apply Theorem \ref{main th}.
        \item When $S_0 \in (\ul S+\e, \ol S)$, then the traits $2$ and $1$ propagate, and $S_\infty = S_1 e^{-\frac{\alpha_1}{\mu_1}\rho_1(S_1)}$, where $S_1 = S_0 e^{-\frac{\alpha_2}{\mu_2}\rho_2(S_0)}$.
        \item When $S_0>\ol S$, then only the trait $2$ propagates, and $S_\infty = S_0 e^{-\frac{\alpha_2}{\mu_2}\rho_2(S_0)}$.
    \end{itemize}
    In addition, we have that $\ul S \to \frac{\mu_2}{\alpha_2}$ and $\e \to 0$ as $\frac{d_2}{d_1} \to +\infty$, and $\ol S$ does not depend on $d_1,d_2$.
\end{itemize}
\end{corollary}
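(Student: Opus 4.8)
The strategy is to apply Theorem~\ref{main th}, which reduces the whole statement to computing the propagation sequences of Definition~\ref{def prop seq} as a function of $S_0$ and checking that hypotheses \eqref{hyp}--\eqref{hyp 2} hold, since $S_\infty = S_p$ whenever the theorem applies. The only quantitative ingredient is the comparison of the two candidate speeds. Since
\[
c_1(\sigma)^2 - c_2(\sigma)^2 = 4\big[(d_1\alpha_1 - d_2\alpha_2)\sigma - (d_1\mu_1 - d_2\mu_2)\big]
\]
is affine in $\sigma$ and equals $4 d_1\alpha_1\big(\tfrac{\mu_2}{\alpha_2} - \tfrac{\mu_1}{\alpha_1}\big) > 0$ at $\sigma = \tfrac{\mu_2}{\alpha_2}$ (using the standing assumption), the sign of $c_1 - c_2$ on the range $\sigma > \tfrac{\mu_2}{\alpha_2}$, where both speeds are positive, is governed by a single threshold, while for $\tfrac{\mu_1}{\alpha_1} < \sigma < \tfrac{\mu_2}{\alpha_2}$ one trivially has $c_2(\sigma) = 0 < c_1(\sigma)$. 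I shall also use the two facts recalled before the statement: if a single trait $k$ propagates alone from a level $\sigma > \tfrac{\mu_k}{\alpha_k}$, the residual level $\sigma\, e^{-(\alpha_k/\mu_k)\rho_k(\sigma)}$ is the smallest root of $z - \tfrac{\mu_k}{\alpha_k}\ln z = \sigma - \tfrac{\mu_k}{\alpha_k}\ln \sigma$ and is $< \tfrac{\mu_k}{\alpha_k}$; and, by strict convexity of $z \mapsto z - \tfrac{\mu_k}{\alpha_k}\ln z$, the map $\sigma \mapsto \sigma\,e^{-(\alpha_k/\mu_k)\rho_k(\sigma)}$ is a continuous, strictly decreasing bijection of $\big(\tfrac{\mu_k}{\alpha_k},+\infty\big)$ onto $\big(0,\tfrac{\mu_k}{\alpha_k}\big)$.

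For the first range I take $\ul r := \alpha_1/\alpha_2$. When $d_2/d_1 < \ul r$ the affine function above is increasing and positive at $\tfrac{\mu_2}{\alpha_2}$, hence positive on $[\tfrac{\mu_2}{\alpha_2},+\infty)$, so $c_1(\sigma) > c_2(\sigma)$ whenever $c_1(\sigma) > 0$. Consequently, if $S_0 > \tfrac{\mu_1}{\alpha_1}$ the propagation sequence starts with $k_1 = 1$ and then $S_1 = S_0 e^{-(\alpha_1/\mu_1)\rho_1(S_0)} < \tfrac{\mu_1}{\alpha_1} < \tfrac{\mu_2}{\alpha_2}$ kills trait $2$, so $p = 1$; if $S_0 < \tfrac{\mu_1}{\alpha_1}$ then $p = 0$. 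In both cases \eqref{hyp} is void and \eqref{hyp 2} is immediate, so Theorem~\ref{main th} yields $S_\infty = S_0$ for $S_0 < \tfrac{\mu_1}{\alpha_1}$ and $S_\infty = S_0 e^{-(\alpha_1/\mu_1)\rho_1(S_0)}$, i.e.\ the smallest root of $z - \tfrac{\mu_1}{\alpha_1}\ln z = S_0 - \tfrac{\mu_1}{\alpha_1}\ln S_0$, for $S_0 > \tfrac{\mu_1}{\alpha_1}$.

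For the second range, when $d_2/d_1$ is large we have $d_2\alpha_2 > d_1\alpha_1$, the affine function decreases, vanishing at $\ul S := \tfrac{d_2\mu_2 - d_1\mu_1}{d_2\alpha_2 - d_1\alpha_1}$, and a short computation gives $\ul S > \tfrac{\mu_2}{\alpha_2}$ with $\ul S \to \tfrac{\mu_2}{\alpha_2}$ as $d_2/d_1 \to +\infty$. I let $\ol S$ be the unique point of $\big(\tfrac{\mu_2}{\alpha_2},+\infty\big)$ with $\ol S\, e^{-(\alpha_2/\mu_2)\rho_2(\ol S)} = \tfrac{\mu_1}{\alpha_1}$, equivalently the largest root of $z - \tfrac{\mu_2}{\alpha_2}\ln z = \tfrac{\mu_1}{\alpha_1} - \tfrac{\mu_2}{\alpha_2}\ln \tfrac{\mu_1}{\alpha_1}$; it depends only on $\mu_2/\alpha_2$. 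Since $\ul S \to \tfrac{\mu_2}{\alpha_2} < \ol S$, one picks $\ol r$ large enough that $d_2/d_1 > \ol r$ forces both $\ul S < \ol S$ and $c_2' > c_1'$ on $[\ul S,\ol S]$ (used in the next paragraph). Reading off the sequence: for $S_0 < \tfrac{\mu_1}{\alpha_1}$, $p = 0$ and $S_\infty = S_0$; for $\tfrac{\mu_1}{\alpha_1} < S_0 < \ul S$, one has $c_1(S_0) > c_2(S_0)$ (or $c_2(S_0) = 0$), so $k_1 = 1$, $S_1 < \tfrac{\mu_1}{\alpha_1}$, $p = 1$ and $S_\infty = S_0 e^{-(\alpha_1/\mu_1)\rho_1(S_0)}$; for $S_0 > \ol S$, $c_2(S_0) > c_1(S_0) > 0$, so $k_1 = 2$, and by the monotone bijection above $S_1 = S_0 e^{-(\alpha_2/\mu_2)\rho_2(S_0)} < \ol S\, e^{-(\alpha_2/\mu_2)\rho_2(\ol S)} = \tfrac{\mu_1}{\alpha_1}$, so $p = 1$ and $S_\infty = S_0 e^{-(\alpha_2/\mu_2)\rho_2(S_0)}$. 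In all these subcases \eqref{hyp} is void and \eqref{hyp 2} is immediate.

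The delicate subcase is $\ul S < S_0 < \ol S$. There $c_2(S_0) > c_1(S_0) > 0$ gives $k_1 = 2$, and $S_1 = S_0 e^{-(\alpha_2/\mu_2)\rho_2(S_0)}$ lies in $\big(\tfrac{\mu_1}{\alpha_1},\tfrac{\mu_2}{\alpha_2}\big)$ (lower bound by the bijection and $S_0 < \ol S$, upper bound by the recalled fact), so $c_1(S_1) > 0 = c_2(S_1)$ forces $k_2 = 1$ and then $S_2 = S_1 e^{-(\alpha_1/\mu_1)\rho_1(S_1)} < \tfrac{\mu_1}{\alpha_1}$, whence $p = 2$. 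Here \eqref{hyp 2} is void and \eqref{hyp}, for $p = 2$ with $k_1 = 2$, $k_2 = 1$, reduces to the single inequality $\phi(S_0) > 0$, where $\phi(\sigma) := c_2(\sigma) - c_1(\sigma) - c_1\!\big(\sigma\,e^{-(\alpha_2/\mu_2)\rho_2(\sigma)}\big)$. The key point is that $\{\phi > 0\}\cap(\ul S,\ol S)$ equals an interval $(\ul S+\e,\ol S)$ with $\e \ge 0$: $\phi$ is continuous with $\phi(\ul S) = -c_1\!\big(S_1(\ul S)\big) < 0$ and $\phi(\ol S) = c_2(\ol S) - c_1(\ol S) > 0$, and $\phi$ is strictly increasing, since $\phi' = (c_2'-c_1') - c_1'(S_1)S_1'$ with $S_1' < 0$, so the last term is positive, while $c_2' > c_1'$ on $[\ul S,\ol S]$ once $d_2/d_1$ is large enough (from $c_k'(\sigma) = \sqrt{d_k\alpha_k^2/(\alpha_k\sigma - \mu_k)}$ and the fact that, $\ol S$ being independent of $d$, the ratio $\tfrac{\alpha_1\sigma-\mu_1}{\alpha_2\sigma-\mu_2}$ stays bounded below on $[\ul S,\ol S]$). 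Together with Theorem~\ref{main th} this gives the announced two-trait behaviour on $(\ul S+\e,\ol S)$ and ``no conclusion'' on $(\ul S,\ul S+\e)$. Finally $\e \to 0$ as $d_2/d_1 \to +\infty$ because $\phi(\ul S) \to -c_1\!\big(\tfrac{\mu_2}{\alpha_2}\big)$, a fixed negative number, whereas $c_2'$, hence $\phi'$, near $\ul S$ blows up of order $d_2/d_1$ (since $c_2'(\ul S) = \sqrt{d_2\alpha_2^2/(\alpha_2\ul S - \mu_2)}$ and $\alpha_2\ul S - \mu_2$ is of order $d_1/d_2$), forcing the zero of $\phi$ towards $\ul S$; the limits $\ul S \to \tfrac{\mu_2}{\alpha_2}$ and the independence of $\ol S$ from $d_1,d_2$ were already recorded. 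Verifying \eqref{hyp} in this two-trait window---locating $\ul S+\e$, establishing the monotonicity of $\phi$ (which is precisely why $d_2/d_1 > \ol r$, rather than merely $d_2/d_1 > \ul r$, is needed), and controlling $\e$ quantitatively---is the main obstacle; everything else is bookkeeping with the propagation sequences.
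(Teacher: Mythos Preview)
Your proof is correct and follows the same overall strategy as the paper: compute the propagation sequences case by case in $S_0$, check \eqref{hyp}--\eqref{hyp 2}, and invoke Theorem~\ref{main th}. The choices of $\ul r=\alpha_1/\alpha_2$, $\ul S=\tfrac{d_2\mu_2-d_1\mu_1}{d_2\alpha_2-d_1\alpha_1}$, and $\ol S$ as the largest root of $z-\tfrac{\mu_2}{\alpha_2}\ln z=\tfrac{\mu_1}{\alpha_1}-\tfrac{\mu_2}{\alpha_2}\ln\tfrac{\mu_1}{\alpha_1}$ coincide with the paper's.

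The one genuine difference lies in how you handle the separation condition \eqref{hyp} on $(\ul S,\ol S)$. The paper uses the crude bound $c_1(S_0)+c_1(S_1)<2c_1(S_0)$ and then solves $2c_1(S_0)<c_2(S_0)$ explicitly; squaring turns this into a linear inequality in $S_0$, yielding a closed formula
\[
\e=\frac{3\,d_1d_2(\mu_2\alpha_1-\mu_1\alpha_2)}{(d_2\alpha_2-4d_1\alpha_1)(d_2\alpha_2-d_1\alpha_1)},
\]
valid once $\ol r>4\alpha_1/\alpha_2$, from which $\e\to 0$ is immediate. Your route---studying the exact gap $\phi(\sigma)=c_2(\sigma)-c_1(\sigma)-c_1(S_1(\sigma))$ and proving it increasing via $c_2'>c_1'$ on $[\ul S,\ol S]$---gives the sharp $\e$, at the price of an extra largeness condition on $\ol r$ (so that $c_2'>c_1'$ holds up to $\ol S$) and a slightly more delicate limit argument. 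Your heuristic for $\e\to 0$ can be made rigorous simply by integrating the inequality $\phi'\ge c_2'-c_1'$ from $\ul S$ to $\ul S+\delta$ and using $c_2(\ul S)=c_1(\ul S)$: this gives $\phi(\ul S+\delta)\ge c_2(\ul S+\delta)-c_1(\ul S+\delta)-c_1(S_1(\ul S))$, and the right-hand side tends to $+\infty$ for each fixed $\delta>0$ as $d_2/d_1\to\infty$ (with $d_1$ bounded), since $c_2(\ul S+\delta)\sim 2\sqrt{d_2\alpha_2\delta}$ while the other two terms stay bounded. Either method proves the corollary; the paper's is quicker and more explicit, yours yields the optimal window.
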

Let us give here some remarks concerning this lemma. Of course, the most interesting situation is when $d_1<<d_2$. In this case, plotting $S_\infty$ as a function of $S_0$ would give
\begin{center}
    \includegraphics[scale=0.7]{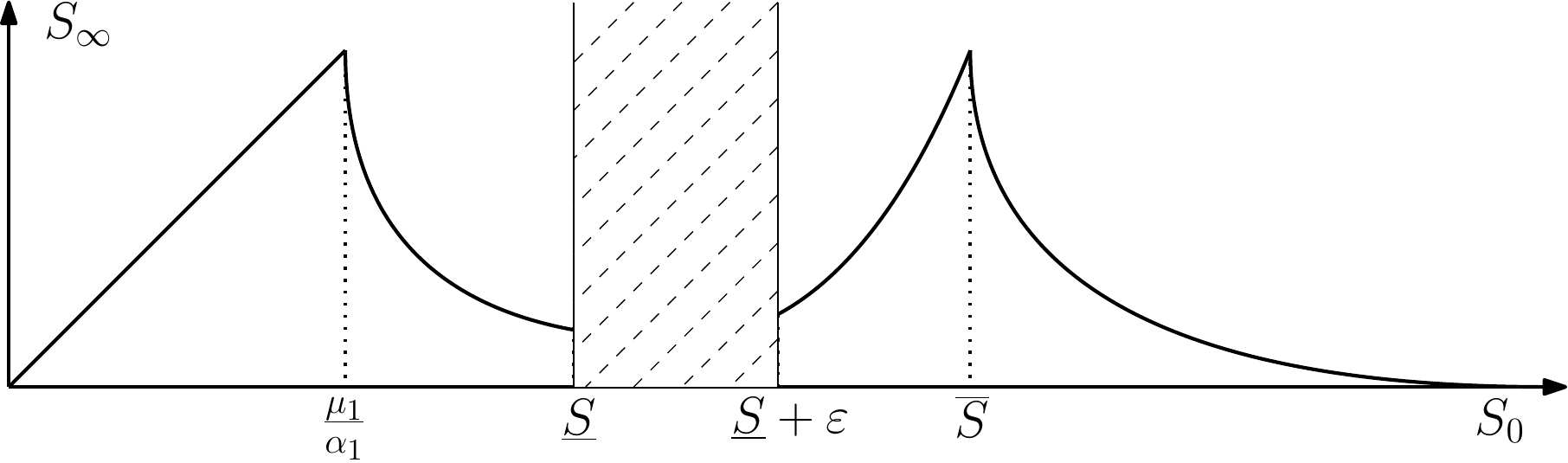}
\end{center}
The dashed zone, for $S_0\in (\ul S, \ul S+\e)$, corresponds to the zone where we can not apply our theorem (\eqref{hyp} is not verified there). However, observe that the zone can be made as small as we want up to increasing $d_2$ or decreasing $d_1$.

We now turn to the proofs of these results.
\begin{proof}[Proof of Corollary \ref{cor 1}]
We take $d_1 = d_2 =1$ fixed without loss of generality.

Let $\alpha_1,\mu_1,S_0$ fixed in such a way that
$$
\frac{\alpha_1 S_0}{\mu_1} >1.
$$
Now, let $S_1 = S_0 e^{-\frac{\alpha_1}{\mu_1}\rho_1}$ where $\rho_1$ is the unique positive solution of $f_1(\rho_1,S_0)=0$. We have
$$
\frac{\alpha_1 S_1}{\mu_1}<1.
$$
Take $\lambda \in (\frac{\alpha_1}{\mu_1},\frac{1}{S_1})$ and define $\alpha_2 = \e \lambda$ and $\mu_2 = \e$, for $\e>0$. Then
$$
\frac{\alpha_1 S_0}{\mu_1} < \lambda S_0 = \frac{\alpha_2 S_0}{\mu_1}.
$$
Moreover, up to taking $\e>0$ small enough, we ensure that
$$
\alpha_2 S_0 - \mu_2 = \e( \lambda S_0 - 1) <\alpha_1 S_0 - \mu_1.
$$
We also have
$$
\alpha_2 S_1 - \mu_2 = \e( \lambda S_1 - 1)<0.
$$
We can compute the propagation sequence associated with these parameters: we find $p=1$ and $k_1 = 1$. Moreover, the hypotheses of Theorem \ref{main th} are verified, then only the trait $1$ propagates and the trait $2$ goes extinct.
\end{proof}

We now turn to the proof of the second qualitative property.
\begin{proof}[Proof of Corollary \ref{cor 2}]

Assume that the coefficients satisfy the hypotheses. Then, by definition of the propagation sequences, we know that, for all $i\in \llbracket 1,p-1\rrbracket$,
$$
c_{k_{i+1}}(S_{i-1}) < c_{k_i}(S_{i-1}),
$$
that is (recall that we assume that the diffusions are equal),
$$
\alpha_{k_{i+1}}S_{i-1} - \mu_{k_{i+1}} <\alpha_{k_i}S_{i-1} - \mu_{k_i},
$$
hence
\begin{equation}\label{mu a}
(\alpha_{k_{i+1}}-\alpha_{k_{i}})S_{i-1}<\mu_{k_{i+1}}- \mu_{k_{i}}.
\end{equation}
In addition, because the trait $k_{i+1}$ propagates, we necessarily have
\begin{equation}\label{i1}
\alpha_{k_{i+1}}S_i - \mu_{k_{i+1}} >0.
\end{equation}
Moreover, by definition of $S_i$, we have  
\begin{equation}\label{i2}
\alpha_{k_i}S_i - \mu_{k_i}<0.
\end{equation}
Combining \eqref{mu a}, \eqref{i1} and \eqref{i2} yields
$$
(\alpha_{k_{i+1}}-\alpha_{k_{i}})S_{i-1}<(\alpha_{k_{i+1}}-\alpha_{k_{i}})S_{i}.
$$
Now, because $S_{i-1}>S_{i}$, we have
$$
\alpha_{k_{i+1}}<\alpha_{k_{i}}.
$$
Combining this again with \eqref{i1} and \eqref{i2} yields
and this implies, thanks to \eqref{mu a}
$$
\mu_{k_{i+1}}<\mu_{k_i},
$$
and this concludes the proof.
\end{proof}

We now prove our third qualitative property.
\begin{proof}[Proof of Corollary \ref{cor 3}]
We define 
$$
\ul r := \frac{\alpha_1}{\alpha_2}.
$$
Then, when $\frac{d_2}{d_1}\leq \ul r$, we have $d_2\alpha_2 \leq d_1 \alpha_1$ and, for all $S_0>0$,
$$
c_1(S_0)\geq c_2(S_0).
$$
If $S_0<\frac{\mu_2}{\alpha_2}$, then $\alpha_2 S_0 - \mu_2<\alpha_1 S_0 - \mu_1<0$. We can compute the propagation sequences: we find $p=0$, the sequences are empty. We can apply Theorem \ref{main th} to get that no trait propagate, and then $S_\infty = S_0$.

If $S_0>\frac{\mu_2}{\alpha_2}$, then $c_1(S_0)>0$. Computing the propagation sequence, we find that $k_1 = 1$, $S_1 = S_0e^{-\frac{\alpha_1}{\mu_1}\rho_1(S_0)} < \frac{\mu_1}{\alpha_1}$. This implies that $S_1 < \frac{\mu_2}{\alpha_2}$, then $\alpha_2 S_1 - \mu_2<0$, hence $p=1$. We can apply Theorem \ref{main th} to find that only the trait $1$ propagates, and $S_\infty = S_1$.\\

Now, we let $\ol r>0$ be a real number to be taken sufficiently large after (unlike $\ul r$, it will not be explicit). We assume for now that $\ol r > \frac{\alpha_1}{\alpha_2}$.

If $S_0<\frac{\mu_1}{\alpha_1}$, the situation is the same as above, no trait propagate and $S_\infty = S_0$.

Now, let
$$
\ul S := \frac{d_2 \mu_2 - d_1 \mu_1}{d_2 \alpha_2 - d_1 \alpha_1}.
$$
Because we assumed that $\ol r >\frac{\alpha_1}{\alpha_2}$, an easy computation shows that $\ul S >\frac{\mu_2}{\alpha_2}$, and, for $S_0 \in (\frac{\mu_1}{\alpha_1},\ul S)$, we have $c_1(S_0)>c_2(S_0)$.

Hence, computing the propagation sequence, we find that $k_1 =1$ and $S_1 = S_0e^{-\frac{\alpha_1}{\mu_1}\rho_1(S_0)}$. We have $S_1<\frac{\mu_1}{\alpha_1}<\frac{\mu_2}{\alpha_2}$, then $d_2(\alpha_2 S_1 - \mu_2)<0$, and we find $p=1$.

We can apply Theorem \ref{main th}, we find that only the trait $1$ propagates and $S_\infty = S_1 = S_0e^{-\frac{\alpha_1}{\mu_1}\rho_1(S_0)}$.

Now, let $\ol S$ be the largest solution of
$$
\ol S - \frac{\mu_2}{\alpha_2}\ln(\ol S) = \frac{\mu_1}{\alpha_1} - \frac{\mu_2}{\alpha_2}\ln(\frac{\mu_1}{\alpha_1}).
$$
On the one hand, $\ol S$ does not depend on $d_1,d_2$. On the other hand, it is readily seen that $\ul S$ is a decreasing function of $\frac{d_2}{d_1}$ and that, when $\frac{d_2}{d_1}\to+\infty$, we have $\ul S \to \frac{\mu_2}{\alpha_2}$. We assume from now on that $\ol r$ is large enough so that, for $\frac{d_2}{d_1}>\ol r$, we have $\ol S > \ul S$.

We find that, for $S_0>\ul S$, we have
$$
c_1(S_0)<c_2(S_0).
$$
Computing the propagation sequence, we find $k_1 = 2$, and $S_1 = S_0e^{-\frac{\alpha_2}{\mu_2}\rho_2(S_0)}$. If $S_0\in (\ul S, \ol S)$, owing to our choice of $\ol S$, we have $S_1 >\frac{\mu_1}{\alpha_1}$, and then
$$
c_1(S_1)>0.
$$
Then, we find $p=2$, and $S_\infty = S_2 = S_1 e^{-\frac{\alpha_1}{\mu_1}\rho_1(S_1)}$.

Now, to apply Theorem \ref{main th}, we need to verify that \eqref{hyp} holds true, i.e., we can apply the theorem only when $S_0$ is such that
$$
c_1(S_0)+c_1(S_1)<c_2(S_0).
$$
Unfortunately, this can not be everywhere verified (indeed, $c_2(S_0) - c_1(S_0) \to 0$ when $S_0 \to \ul S$). Finding the minimal $S_0$ that satisfies this is rather intricate ($S_1$ is a function of $S_0$), but observe that
$$
c_1(S_0)+c_1(S_1)< 2c_1(S_0) = 4\sqrt{d_1(\alpha_1 S_0 - \mu_1)}.
$$
Up to increasing $\ol r$ so that $\ol r> 4\frac{\alpha_1}{\alpha_2}$, it is easy to see that
$$
4\sqrt{d_1(\alpha_1 S_0 - \mu_1)} < 2\sqrt{d_2(\alpha_2 S_0 - \mu_2)} \iff S_0 > \ul S +\e,
$$
where $\e = \frac{3 d_2d_1(\mu_2 \alpha_1 - \mu_1 \alpha_2)}{(d_2\alpha_2 -4d_1\alpha_1)(d_2\alpha_2 - d_1 \alpha_1)}$ indeed goes to zero when $\frac{d_2}{d_1}\to+\infty$.

Now, when $S_0>\ol S$, we still have $c_2(S_0)>c_1(S_0)$, but now $\alpha_1S_1 - \mu_1<0$. Hence, the propagation sequence contains one element, $p=1$, and we can apply Theorem \ref{main th} to find that only the trait $2$ propagates, and $S_\infty = S_1 = S_0e^{-\frac{\alpha_2}{\mu_2}\rho_2(S_0)}$.
\end{proof}

\end{document}